\newtheorem{Lemma}{Lemma}[section]
\newtheorem{Theorem}[Lemma]{Theorem}
\theoremstyle{definition}
\newtheorem{Definition}[Lemma]{Definition}
\theoremstyle{remark}
\newtheorem*{Proof}{Proof}
\newtheorem{Remark}[Lemma]{Remark}
\newtheorem{Example}[Lemma]{Example}
\newtheoremstyle{citing}
{3pt}
{3pt}
{\itshape}
{}
{\bfseries}
{.}
{.5em}
{\thmnote{#3}}
\theoremstyle{citing}
\newtheoremstyle{proof*}
{3pt}
{3pt}
{\rmfamily}
{}
{ \itshape}
{.}
{.5em}
{\thmnote{#3}}
\theoremstyle{proof*}
\newtheorem*{proof*}{}
\DeclareMathOperator{\restrict}{\llcorner}
\DeclareMathOperator{\Clos}{Clos}  
\DeclareMathOperator{\Tan}{Tan}     
\DeclareMathOperator{\spt}{spt}     
\DeclareMathOperator{\im}{im}       
\DeclareMathOperator{\Lip}{Lip}     
\DeclareMathOperator{\dmn}{dmn}     
\DeclareMathOperator{\Nor}{Nor}
\DeclareMathOperator{\Hom}{Hom}     
\DeclareMathOperator{\Der}{D}       
\DeclareMathOperator{\ap}{ap}
\DeclareMathOperator{\reach}{reach}
\DeclareMathOperator{\trace}{trace}
\DeclareMathOperator{\Dual}{Dual}
\DeclareMathOperator{\Dis}{Dis}
\DeclareMathOperator{\discr}{discr}
\newcommand{\Real}[1]{ \mathbf{R}^{#1}}
\newcommand{\Haus}[1]{ \mathscr{H}^{#1} }
\newcommand{\Leb}[1]{ \mathscr{L}^{#1} }
\newcommand{\rect}[1]{(\mathscr{H}^{#1},#1)}
\newcommand{\Hdensity}[3]{\bm{\Theta}^{#1}(\mathscr{H}^{#1}\restrict \,#2,#3 )}
\newcommand{\infHdensity}[3]{\bm{\Theta}_{*}^{#1}(\mathscr{H}^{#1}\restrict \,#2,#3 )}
\title{Second order rectifiability of varifolds of bounded mean curvature}
\author{Mario Santilli}
\begin{document}
	\maketitle
	\begin{abstract}
	We prove that the support of an $ m $ dimensional rectifiable varifold with a uniform lower bound on the density and bounded generalized mean curvature can be covered $ \Haus{m} $ almost everywhere by a countable union of $m$ dimensional submanifolds of class $ \mathcal{C}^{2} $. We obtain this result using the notion of curvature of arbitrary closed sets originally developed in stochastic geometry and extending to our geometric setting techniques developed by Trudinger in the theory of viscosity solutions of PDE's.
	\end{abstract}

\paragraph{\small MSC-classes 2010.}{\small 49Q15, 53C65, 35D40, 35J60.}
\paragraph{\small Keywords.}{\small varifold, second order rectifiability, normal bundle, bounded mean curvature.}
	
	\section{Introduction}

The concept of varifold goes back to the work of Almgren in the 60's and, since then, has played a central role in Geometric Measure Theory and in its applications. The definition is simple: an $ m $-dimensional varifold $ V $ in an open subset $ \Omega $ of $ \Real{n} $ is a Radon measure over $ \Omega \times \mathbf{G}(n,m) $, where $ \mathbf{G}(n,m) $ is the Grassmann manifold of all $ m $ dimensional subspaces of $ \Real{n} $. Given such a $ V $, we define (1) \emph{the weight measure} $ \|V\| $ of $ V $\footnote{$ \|V\| $ is the Radon measure over $ \Omega $ such that $ \|V\|(U)=V(U \times \mathbf{G}(n,m)) $ for each open subset $ U $ of $ \Omega $.}, (2) the vector-valued distribution \mbox{$ \delta V : \mathcal{C}^{\infty}_{c}(\Omega, \Real{n}) \rightarrow \Real{} $} called \emph{(isotropic) first variation}\footnote{$ \delta V(g)  = \int \Der g \bullet S \, dV(x,S) $ for every $ g \in  \mathcal{C}^{\infty}_{c}(\Omega) $, that is the initial rate of change of the total mass of the smooth deformation of $ V $ with initial velocity given by $ g $.} and (3) the total variation\footnote{$ \| \delta V \| $ is the largest Borel regular measure over $ \Omega $ such that for each open set $ U \subseteq \Omega $ the number $	\| \delta V \|(U)$ equals $ \sup \{ \delta V(g) : g \in \mathcal{C}_{c}^{\infty}(U, \Real{n}), \; | g | \leq 1  \} $.} $ \|\delta V \| $ of $ \delta V $. If the $ m $ dimensional upper density $\bm{\Theta}^{\ast m}(\|V\| ,x)  $ is positive at $ \| V \| $ a.e.\ $ x \in \Omega $ and if $ \| \delta V \| $ is a Radon measure over $ \Omega $, then the celebrated rectifiability theorem of Allard \cite[5.5, 2.8(5)]{MR0307015} asserts that the set $ \{x: 0 < \bm{\Theta}^{\ast m}(\|V\|, x) < \infty \} $ can be $ \Haus{m} $ almost covered by the union of a countable collection of $ m $ dimensional submanifolds of class $ \mathcal{C}^{1} $ of $ \Real{n} $ and $ \|V\| = \Haus{m} \restrict \bm{\Theta}^{m}(\|V\|, \cdot) $. See also \cite{MR3794529} for a recent extension of Allard's rectifiability result to the anisotropic case.

The regularity theorems of Allard and Duggan, \cite[8]{MR0307015} and \cite[Theorem 2.1]{MR844169}, allows to conclude that if $ 2 \leq m < p < \infty $, $ \theta > 0 $, \mbox{$ \alpha < \infty $ and} $ V $ is an $ m $ dimensional varifold such that $ \bm{\Theta}^{m}(\|V\|,x) \geq \theta $ for $ \|V\| $ almost every $ x \in  \Omega $ and such that $ \delta V(g) \leq \alpha (\int |g|^{p/(p-1)}\, d\|V\|)^{(p-1)/p} $ for every $ g \in \mathcal{C}_{c}^{\infty}(\Omega, \Real{n}) $\footnote{the generalized mean curvature vector $ \mathbf{h}(V, \cdot) $ of $ V $ defined in \cite[4.3]{MR0307015} belongs to $ L^{p}(\|V\|, \Real{n}) $.}, then a dense open subset of $ \spt \|V\| $ is an $ m $ dimensional submanifold $ M $ of class $ W^{2, p} $. If we additionally assume that there exists $ \beta < 2 $ such that $ \bm{\Theta}^{m}(\|V\|, x) \leq \beta $ for $ \|V\| $ almost every $ x \in \Omega $, then the conclusion can be strengthened to $ \|V\|(\Omega \sim M) =0 $. However, one may construct \emph{integral} varifolds $ V $ of \emph{higher multiplicity} such that $ \|\delta V\| \leq \alpha \|V\| $ (i.e.\ $ \mathbf{h}(V, \cdot) \in L^{\infty}(\|V\|, \Real{n}) $) and $ V $ cannot be locally represented as a graph of multiple-valued function around each point of a set of positive $ \|V\| $ measure, see \cite[8.1(2)]{MR0307015} and \cite[6.1]{MR485012}. It follows that, in the case of higher multiplicity, the structure around almost every point of a varifold cannot be studied using classical regularity theory, even under the rather strong assumption $ \|\delta V\| \leq \alpha \|V\| $ (however, in the case $ \delta V =0 $, it is an open question if classical regularity holds almost everywhere). 

On the other hand it is reasonable to presume that an integrable mean curvature should entail a certain amount of regularity around almost every point and this regularity has been effectively discovered in recent years \emph{in the case of integral\footnote{The density function is integer-valued.} varifolds}.  In particular, the following results are currently known: (1) rectifiability of class $ \mathcal{C}^{2} $ has been completely solved in \cite[Theorem 1]{MR3023856} (see also \cite[5.1]{MR2064971}-\cite[3.1]{MR2472179} for the first positive result ever obtained in this direction), (2) tilt excess decay rates has been systematically clarified in most of the cases in \cite{MR485012}, \cite{MR2064971}, \cite{MR2898736}, \cite{MR3023856} and \cite{MR3625810}, and (3) the equivalence of quadratic decay rates and rectifiability of class $ \mathcal{C}^{2} $ has been proved in \cite[3.1]{MR2472179}. In contrast, \emph{for general rectifiable varifolds (i.e.\ the density function is real-valued), up to now, none of the aforementioned results is known} (not even in the stationary case $ \delta V =0 $). One problem to extend them to this more setting is that in the integral case they rely on the theory of $Q$-valued functions developed by Almgren and on a blow-up procedure, which has been originally developed by Brakke in \cite[5.6]{MR485012}. How to extend these techniques to non-integral varifolds is currently unclear.

In this paper, following a completely different approach, we prove rectifiability of class $ \mathcal{C}^{2} $ for varifolds with a uniform lower bound on the density and bounded generalized mean curvature, thus providing the first positive regularity results valid for almost every point of varifolds with real-valued densities with possible higher multiplicity.  Our main result reads as follows:

\begin{Theorem}\label{second order rect for varifolds}
 	Suppose $ 1 \leq m < n $ are integers, $\Omega \subseteq \Real{n} $ is an open set, $ V $ is an $ m $ dimensional varifold in $ \Omega $, $ S = \spt \|V\| $ and the following two conditions hold:
 	\begin{enumerate}
 		\item there exists $ 0 \leq h < \infty $ such that $ \| \delta V \| \leq h \|V\| $, 
 		\item there exists $ \theta > 0 $ such that $ \bm{\Theta}^{m}(\|V\|,x) \geq \theta $ for $ \|V\| $ a.e.\ $ x \in \Omega $.
 	\end{enumerate}
 
 Then $ S $ can be $\Haus{m}$ almost covered by a countable collection of $ m $ dimensional submanifolds of class $ 2 $ in $ \Real{n} $.
 \end{Theorem}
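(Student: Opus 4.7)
The plan is to first reduce the problem to upgrading $C^{1}$-rectifiability to $C^{2}$-rectifiability. Condition (1) makes $\|\delta V\|$ a Radon measure dominated by $h\|V\|$, so the monotonicity formula yields a locally uniform upper density bound on $\|V\|$, and together with (2) this places us in the setting of Allard's rectifiability theorem. Hence $V$ is rectifiable and $S = \spt\|V\|$ is a countably $m$-rectifiable closed set with approximate tangent $m$-planes $\Haus{m}$-a.e. By a Whitney-type extension theorem for $\mathcal{C}^{2}$ manifolds, it is then enough to show that at $\Haus{m}$-a.e.\ $x \in S$ there exist an $m$-plane $T_{x}$ and a symmetric bilinear form $B_{x}$ on $T_{x}$ with values in $T_{x}^{\perp}$ such that $S$ agrees with the paraboloid $x + T_{x} + \tfrac{1}{2}B_{x}(\pi_{T_{x}}(\cdot - x),\pi_{T_{x}}(\cdot - x))$ up to $o(|y-x|^{2})$ as $y\to x$ along $S$.

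Next I would import the set-theoretic differential geometry of closed sets from stochastic geometry. For $S$ closed, let $\Unp(S)$ be the set where the metric projection $\xi_{S}$ is single-valued, and introduce the generalized unit normal bundle
\begin{equation*}
N(S) = \{(x,\nu) : x\in S,\; |\nu|=1,\; \xi_{S}(x + r\nu) = x \text{ for some } r>0\}.
\end{equation*}
On $N(S)$ one has generalized principal curvatures $\kappa_{i}(x,\nu)$, defined $\Haus{n-1}$-a.e.\ via the differentiability of $\xi_{S}$ on $\Unp(S)$. The point of this formalism is that at an $x\in S$ at which the generalized curvatures are finite in a set of normal directions spanning $T_{x}^{\perp}$ and are compatible with the approximate tangent plane $T_{x}$, one automatically produces the desired second-order Taylor expansion.

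The core analytic step is to exploit (1) in the spirit of Trudinger's viscosity technique. The target is to prove that for $\Haus{m}$-a.e.\ $x \in S$ and each unit vector $\nu \in T_{x}^{\perp}$, there is a paraboloid of aperture bounded by a constant $C = C(n,h,\theta)$ touching $S$ at $x$ from the $\nu$ side. This is essentially an Aleksandrov--Bakelman--Pucci-type estimate for the distance function $\delta_{S}$, where the first variation bound $h$ plays the role of the right-hand side of the PDE and the lower density bound $\theta$ guarantees that enough of $S$ lies in every tangent direction to make touching genuinely two-sided. Concretely, one controls the $\|V\|$-measure of the "bad" set of $x$ where every candidate paraboloid of aperture $\leq C$ is pushed away from $S$, by testing the first variation against a cut-off supported on a thin tube around $S$ in the unfavorable direction.

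The main obstacle will be precisely this translation: converting the weak integral bound $\|\delta V\|\leq h\|V\|$ into a pointwise, two-sided touching-paraboloid bound on $S$, without the aid of the $Q$-valued blow-ups and tilt-excess technology that are available in the integral case \cite{MR3023856}. This forces one to work directly with the sub/super-level set geometry of $\delta_{S}$ and to combine the monotonicity formula with a Calderón--Zygmund/covering argument on $S$ to upgrade a weak distributional bound into an $\Haus{m}$-almost-everywhere statement. Once the touching paraboloids are produced with the uniform curvature bound $C(n,h,\theta)$, a Whitney extension for $\mathcal{C}^{2}$ graphs packages them into a countable family of $m$-dimensional $\mathcal{C}^{2}$ submanifolds covering $S$ up to an $\Haus{m}$-null set.
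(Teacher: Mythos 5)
Your high-level instincts are right: the generalized normal bundle $N(S)$ from stochastic geometry, the ABP/Trudinger connection, and the conversion of the weak bound $\|\delta V\|\leq h\|V\|$ into $\Haus{m}$-a.e.\ second-order information are exactly the moving parts of the actual proof. But there are two genuine gaps. First, the target you set — a \emph{two-sided} touching paraboloid and a genuine second-order Taylor expansion at $\Haus{m}$-a.e.\ $x\in S$, followed by a Whitney extension — is strictly stronger than what the paper establishes and is likely false in the stated generality: supports of rectifiable varifolds of bounded mean curvature can have branching and higher multiplicity, so near a positive-measure set $S$ need not be a graph or admit a two-sided expansion. What the paper proves instead is $\Haus{m}(S\sim S^{(m)})=0$, where $S^{(m)}$ is the stratum of points that can be touched by closed balls from $n-m$ linearly independent normal directions — a strictly one-sided, non-graphical condition — and then invokes the standalone result \cite[4.12]{2017arXiv170309561M} that $A^{(m)}$ is always countably $C^{2}$-rectifiable, with no Whitney step required. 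Your proposal does not contain a mechanism to go from "some touching ball exists" to "two-sided touching exists," and in fact that passage should not be attempted.

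Second, the hinge that makes the ABP estimate work in this setting is the $m$-dimensional Lusin (N) condition on $N(S)$ (Definition~\ref{Lusin Property}): without it, there is no Coarea-type identity (Theorem~\ref{area formula for the gauss map}) relating the $\Haus{n-1}$-measure of the image of the Gauss map to integrals of $|\discr Q_{S}|$ over $S$, and the Trudinger argument cannot be run. In the paper, the lower density bound $\theta$ and the mean curvature bound $h$ enter precisely to verify this Lusin condition and the pointwise bound $\trace Q_{\Clos S}\leq h$ via \cite{MR3466806} and \cite{2019arXiv190310379S}; your suggestion of "testing the first variation against a cut-off supported on a thin tube" gestures at this but does not produce the needed set-theoretic absolute continuity of $N(S)$ over $S^{(m)}$. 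Once the Lusin condition is in place, the ABP estimate is implemented not by a tube construction but by pushing $S$ forward under a quadratic diffeomorphism $F$ (Lemma~\ref{normal bundle and diffeomorphisms}, Theorem~\ref{second fundamental form and diffeomorphisms}), taking the contact set $L\subseteq N(F[S])$ of supporting hyperplanes, and estimating $\Haus{n-1}(\mathbf{q}[L])$ from above by the Coarea formula and from below by a fixed solid angle — this is what drives Lemma~\ref{Around a point where the set is almost flat in one direction} and Theorem~\ref{final result about second order rectifiability}, and it is the part your proposal leaves unrealized.
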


We explain now the strategy of the proof. The basic tools of our proof are taken from the theory of curvature for arbitrary closed sets, developed in \cite{MR534512}, \cite{MR2031455} and \cite{2017arXiv170801549S}. This theory is based on the definition for a closed subset $ A \subseteq \Real{n} $ of the \emph{generalized unit normal bundle of $ A $}:
\begin{equation*}
N(A) = (A \times \Real{n}) \cap \{ (a,u) : |u|=1, \;  \bm{\delta}_{A}(a+su)=s \; \textrm{for some $ s > 0 $}\}
\end{equation*}
(here $ \bm{\delta}_{A} $ is the distance function from $ A $), whose fiber at $ a $ is denoted by $N(A,a)$. Since $ N(A) $ is a countably $n-1$ rectifiable subset of $ \Real{n} \times \Real{n}$ (in the sense of \cite[3.2.14]{MR0257325}), one may use Coarea formula \cite[3.2.22]{MR0257325} with the projection-maps $ \mathbf{p} $ and $ \mathbf{q} $ (see section \ref{section 2} for notation) to generalize several integral formulas from smooth varieties to general closed sets (see \cite[Theorem 2.1]{MR2031455} and \cite[4.11(3), 5.4]{2017arXiv170801549S}). These formulas are expressed in terms of the \emph{generalized principal curvatures of $ A $} and \emph{the second fundamental form $ Q_{A} $} of $ A $; see section \ref{section 2} for more details. Of course, this theory alone is too general to produce useful results for our purpose. Therefore, in order to proceed, we need to understand how it specializes for the class of closed subsets that are supports of those varifolds considered in \ref{second order rect for varifolds}.  First, given an arbitrary closed set $ A \subseteq \Real{n} $, we introduce the following stratification of $ A $:
\begin{equation*}
A^{(m)} = A \cap  \{a : 0 < \Haus{n-m-1}(N(A,a)) < \infty  \} \quad \textrm{for $ m =0, \ldots , n $.}
\end{equation*}
The $ m $-th stratum $ A^{(m)} $ is the set of points where $ A $ can be touched by balls from $ n-m $ linearly independent directions. A crucial step for our result has been done in \cite{2017arXiv170309561M}, where it is proved that, for an arbitrary closed set $ A $, the $ m $-th stratum $ A^{(m)} $ can be covered by countably many $ m $ dimensional submanifolds of class $ 2$. Therefore the main point of the present paper is to show that if $ S $ is the support of a varifold as in \ref{second order rect for varifolds} then $ \Haus{m}(S \sim S^{(m)}) = 0 $. To prove it, we first introduce the following key definition.
 
\begin{Definition}\label{Lusin Property intro}
	Suppose $ A \subseteq \Real{n} $ is a closed set, $ \Omega \subseteq \Real{n} $ is an open set and $ 1 \leq m < n $ is an integer. We say that $ N(A) $ satisfies the \textit{$ m $ dimensional Lusin (N) condition in $ \Omega $} if and only if the following property holds:
	\begin{equation*}
	\Haus{n-1}(N(A)\cap \{(a,u) : a \in Z  \})=0
	\end{equation*}
	for every $Z \subseteq  A \cap \Omega$ with $\Haus{m}(A^{(m)} \cap Z) =0 $. 
\end{Definition}
Combining \cite[2.8]{MR3466806} with \cite[3.7]{2019arXiv190310379S} one concludes that the unit normal bundle of the support $ S $ of a varifold $ V $ as in \ref{second order rect for varifolds} satisfies the $ m $ dimensional Lusin (N) condition in $ \Omega $ and\footnote{More precisely, here we should consider the closure of $ S $ in $ \Real{n} $, since both the unit normal bundle and the second fundamental form are defined for closed subsets of $ \Real{n} $.} 
\begin{equation*}
	\trace Q_{S}(a,u) \leq h \quad \textrm{for $ \Haus{n-1} $ a.e.\ $ (a,u) \in N(S) $.}
\end{equation*}
This is essentially everything we need to known from varifold's theory and most of the results of this paper can actually be obtained for arbitrary closed sets whose normal bundle satisfies the Lusin (N) condition. The first important consequence of this assumption is the Coarea-type formula in \ref{area formula for the gauss map}. We use such a formula in the main result the paper (which is Lemma \ref{Around a point where the set is almost flat in one direction}) to extend one of the key results of the theory of viscosity solutions of elliptic PDE's, the Alexandrov-Bakelmann-Pucci (ABP) estimate (see \cite[Theorem 3.2]{MR1351007}), to our geometric setting. We do not explicitly write such a formula in the statement of our results, since the study of the ABP inequality in the context of varifolds (or, more generally, in the abstract setting of closed sets) would be beyond the scope of the present paper; however, the reader might recognize the resemblance in inequality \eqref{Around a point where the set is almost flat in one direction:11} of Lemma \ref{Around a point where the set is almost flat in one direction}. The validity of the ABP inequality is the central point to obtain the  criterion for rectifiability of class $ \mathcal{C}^{2} $ in \ref{final result about second order rectifiability}, whence, as one can easily see from what has been pointed out above, Theorem \ref{second order rect for varifolds} follows as a special case. The proof of Lemma 9 and its main consequence Theorem \ref{final result about second order rectifiability} are built upon a careful generalization of the argument employed by Trudinger in \cite[Theorem 1]{MR995142} to prove twice super-differentiability almost everywhere of a viscosity subsolution of an elliptc operator. A moment of reflection reveals that the conclusion of our Theorem \ref{final result about second order rectifiability}, $ \Haus{m}(A \sim A^{(m)}) =0 $, effectively corresponds to twice super-differentiability almost everywhere for $ A $ in an higher-codimensional and non-graphical setting.

We conclude noting that in this paper we do not use the full strength of Theorem \ref{final result about second order rectifiability}; in fact to prove Theorem \ref{second order rect for varifolds} it would have been enough to have $ f $ constant in \ref{final result about second order rectifiability}. However, we decide to state \ref{final result about second order rectifiability} with a much less restrictive hypothesis (and this hypothesis is maybe the optimal one) because it seems natural to think that this approach could also be useful to treat classes of varifolds with possibly unbounded mean curvature. However, verifying the Lusin (N) condition in these more general cases presents several additional non-trivial complications. It is our plan to investigate them in future works.

{\small\textbf{Acknowledgements.} The results of this paper were proved when the author was a Phd Student under the supervision of Prof.\ Ulrich Menne at the Max Planck Institute for Gravitational Physics. The author is grateful to his Phd advisor for his constant and supportive guidance throughout the preparation of this work.}

	\section{Notation and preliminary results}\label{section 2}

The open and closed balls of radius $ r $ and center $ a $ are respectively denoted by $ \mathbf{U}(a,r) $ and $ \mathbf{B}(a,r) $. The closure and the boundary in $ \Real{n} $ of a set $ A $ are denoted by $ \Clos A $ and $ \partial A $. The symbol $ \bullet $ denotes the standard inner product of $\Real{n}$. If $T$ is a linear subspace of $\Real{n}$, then $T_{\natural} : \Real{n} \rightarrow \Real{n}$ is the orthogonal projection onto $T$ and $T^{\perp} = \Real{n} \cap \{ v : v \bullet u =0 \; \textrm{for $u \in T$} \}$. If $X$ and $Y$ are sets and $ Z \subseteq X \times Y $ we set
\begin{equation*}
Z | S = Z \cap \{ (x,y) : x \in S  \} \quad \textrm{for $ S \subseteq X $.}
\end{equation*}
The maps $ \mathbf{p}, \mathbf{q} : \Real{n} \times \Real{n} \rightarrow \Real{n} $ are define by $\mathbf{p}(x,v)= x$ and $\mathbf{q}(x,v) = v$.

We adopt the language of \emph{symmetric algebra} to write in a compact form our formulas: if $ f : V \rightarrow W $ is a linear map between vector spaces, then there exists a unique linear map $\bigodot_{2}f : \bigodot_{2}V \rightarrow \bigodot_{2}W $, which is the restriction of the unique preserving algebra homeomorphism $ \bigodot_{\ast}f : \bigodot_{\ast}V \rightarrow \bigodot_{\ast}W $ onto $ \bigodot_{2}V $, see \cite[1.9]{MR0257325}.

\subsection{Curvatures of arbitrary closed sets}

\emph{The reference for this section is \cite{2017arXiv170801549S}.} 

Suppose $A$ is a closed subset of $ \Real{n} $. The \emph{distance function to $ A $} is denoted by $\bm{\delta}_{A} $. If $U$ is the set of all $x \in \Real{n}$ such that there exists a unique $a \in A$ with $|x-a| = \bm{\delta}_{A}(x)$, we define the \textit{nearest point projection onto~$A$} as the map $\bm{\xi}_{A}$ characterised by the requirement
	\begin{equation*}
	| x- \bm{\xi}_{A}(x)| = \bm{\delta}_{A}(x) \quad \textrm{for $x \in U$}.
	\end{equation*}
	Let $U(A) = \dmn \bm{\xi}_{A} \sim A$. The functions $ \bm{\nu}_{A} $ and $ \bm{\psi}_{A} $ are defined by
	\begin{equation*}
	\bm{\nu}_{A}(z) = \bm{\delta}_{A}(z)^{-1}(z -  \bm{\xi}_{A}(z)) \quad \textrm{and} \quad \bm{\psi}_{A}(z)= (\bm{\xi}_{A}(z), \bm{\nu}_{A}(z)),
	\end{equation*}
	whenever $ z \in U(A)$. We define the Borel function $ \rho(A, \cdot) $ setting
	\begin{equation*}
	\rho(A,x) = \sup \{t : \bm{\delta}_{A}(\bm{\xi}_{A}(x) + t (x-\bm{\xi}_{A}(x) ))=t \bm{\delta}_{A}(x)  \} \quad \textrm{for $ x \in U(A) $,}
	\end{equation*}
	and we say that $ x \in U(A) $ is a \emph{regular point of $ \bm{\xi}_{A} $} if and only if $ \bm{\xi}_{A}$ is approximately differentiable\footnote{See \cite[2.4, 2.6]{2017arXiv170801549S} for the definition of approximate differentiability.} at $ x $ with symmetric approximate differential and $ \ap \liminf_{y \to x} \rho(A,y) \geq \rho(A,x)>1$. The set of regular points of $ \bm{\xi}_{A} $ is denoted by $ R(A)$. It is proved in \cite[3.14]{2017arXiv170801549S} that $ \Leb{n}(\Real{n} \sim (A \cup R(A))) =0 $ and if $ x \in R(A) $ then $ \bm{\xi}_{A}(x) + t(x - \bm{\xi}_{A}(x)) \in R(A) $ for every $ 0 < t < \rho(A,x) $. Moreover, $ \bm{\psi}_{A}| \{ x: \bm{\delta}_{A}(x) = r, \; \rho(A,x) \geq \lambda \} $ is a bi-lipschitzian homeomorphism whenever $ r > 0 $ and $ \lambda > 1 $, see \cite[3.17(1)]{2017arXiv170801549S}. 
	
	Combining these two facts, we now briefly describe how a general notion of second fundamental form for arbitrary closed sets has been introduced in \cite[section 4]{2017arXiv170801549S}. This notion will be repeatedly used in the rest of this paper. First of all, we define \emph{the generalized unit normal bundle of $ A $} as
	\begin{equation*}
	N(A) = (A \times \mathbf{S}^{n-1}) \cap \{ (a,u) :  \bm{\delta}_{A}(a+su)=s \; \textrm{for some $ s > 0 $}\}, 
	\end{equation*}
	with $ N(A,a) = \{ v : (a,v) \in N(A)   \} $ for $ a \in A $. Since 
	\begin{equation*}
	N(A) = \bigcup_{r > 0}\bm{\psi}_{A} [\{ x: \bm{\delta}_{A}(x) = r, \; \rho(A,x) \geq \lambda \}] \quad \textrm{for every $ \lambda > 1 $,}
	\end{equation*}
	one uses the rectifiability properties of the distance sets $ \{x: \bm{\delta}_{A}(x)=r \} $ (see \cite[2.13]{2017arXiv170801549S}) to conclude that $ N(A) $ is a countably $n-1$ rectifiable subset of $ \Real{n}  \times \mathbf{S}^{n-1} $ in the sense of \cite[3.2.14]{MR0257325}. Then we introduce the following definition: if $ x \in R(A) $ then we say that $ \bm{\psi}_{A}(x) $ is \emph{a regular point of $ N(A) $}, and we denote the set of all regular points of $N(A)$ by $R(N(A))$. One may check (see \cite[4.5]{2017arXiv170801549S}) that $ \Haus{n-1}(N(A) \sim R(N(A)) =0 $. For every $ (a,u) \in R(N(A)) $, if $ x \in R(A) $ and $ \bm{\psi}_{A}(x)= (a,u) $, we define 
	\begin{equation*}
	T_{A}(a,u) = \im \ap \Der \bm{\xi}_{A}(x),
	\end{equation*}
	and we define a symmetric bilinear form $ Q(a,u): T_{A}(a,u) \times T_{A}(a,u) \rightarrow \Real{} $ which maps $ (\tau, \tau_{1}) \in T_{A}(a,u) \times T_{A}(a,u) $ into
	\begin{equation*}
	Q_{A}(a,u)(\tau, \tau_{1})= \tau \bullet \ap \Der \bm{\nu}_{A}(x)(\sigma_{1});
	\end{equation*}
here $ \sigma_{1} \in \Real{n} $ is any vector such that $ \ap \Der \bm{\xi}_{A}(x)(\sigma_{1}) = \tau_{1}$. This is  a well-posed definition, see \cite[4.6, 4.8]{2017arXiv170801549S}. We call $ Q_{A}(a,u) $ \textit{second fundamental form of $ A $ at $ a $ in the direction $ u $.} It is not difficult to check that if $ A $ is smooth submanifold, then $Q_{A}$ agrees with the classical notion of differential geometry. Moreover, if $(a,u) \in R(N(A))$ we define \emph{the principal curvatures of $ A $ at $(a,u) $} to be the numbers 
	\begin{equation*}
	\kappa_{A,1}(a,u) \leq \ldots \leq \kappa_{A,n-1}(a,u),
	\end{equation*}
	such that $\kappa_{A,m +1}(a,u) = \infty $, $\kappa_{A,1}(a,u), \ldots , \kappa_{A,m}(a,u)$ are the eigenvalues of $ Q_{A}(a,u)$ and $ m = \dim T_{A}(a,u)$.
	
	\subsection{The second-order rectifiable stratification}\label{section stratification}
\emph{The reference for this section is \cite{2017arXiv170309561M}.}

Suppose $ A \subseteq \Real{n} $ is a closed subset of $ \Real{n} $. For each $ a \in A $ we define (see \cite[4.1, 4.2]{2017arXiv170309561M}) the closed convex subset
	\begin{equation*}
	\Dis(A,a) = \{ v : |v| = \bm{\delta}_{A}(a+v)  \}
	\end{equation*}
	and we notice that $ N(A,a) = \{  v/|v|: 0 \neq v \in \Dis(A,a)  \} $. For every integer $ 0 \leq m \leq n $ we define \textit{the $m$-th stratum of $A$} by
	\begin{equation*}
	A^{(m)} = A \cap \{ a : \dim \Dis(A,a) = n-m \};
	\end{equation*}
	this is a Borel set which is countably $ m $ rectifiable in the sense of \cite[3.2.14]{MR0257325} and it can be $ \Haus{m} $ almost covered by the union of a countable family of $ m $ dimensional submanifolds of class $ \mathcal{C}^{2} $; see \cite[4.12]{2017arXiv170309561M}. This definition agrees with \cite[5.1]{2017arXiv170801549S} by \cite[4.4]{2017arXiv170309561M}. Moreover, on may use the classical Coarea formula for functions to infer that
	\begin{equation}\label{curvature of arbitrary closed sets 4: eq1} 
		A^{(m)} = A \cap  \{a : 0 < \Haus{n-m-1}(N(A,a)) < \infty  \} \quad \textrm{if $ m = 0, \ldots , n-1 $, }
	\end{equation}
	\begin{equation*}
		A^{(n)} = A \cap \{ a : N(A,a) = \varnothing   \}.
	\end{equation*}
	This stratification and its rectifiability properties will play a crucial role in our results. In fact, we achieve rectifiability of class $ \mathcal{C}^{2} $ for a varifold $ V $ as in \ref{second order rect for varifolds} proving that $ \Haus{m}(\spt \|V\| \sim (\spt \|V\|)^{(m)}) =0 $.
	 

\subsection{Curvature under diffeomorphic deformations}

In this section we prove an explicit formula for the second fundamental form $ Q_{F[A]} $ of a diffeomorphic deformation $F[A]$ of an arbitrary closed set $ A $, in terms of $ Q_{A} $. This formula appears to be new even in the smooth setting.

\begin{Lemma}\label{normal bundle and diffeomorphisms}
	Suppose $ A \subseteq \Real{n} $ is a closed set, $ F : \Real{n} \rightarrow \Real{n} $ is a diffeomorphism of class $ 2 $ onto $ \Real{n} $ and $ \nu_{F}: \Real{n}\times \mathbf{S}^{n-1} \rightarrow \Real{n}\times \mathbf{S}^{n-1} $ is given by
	\begin{equation*}
	\nu_{F}(a,u)=\left( F(a), \frac{  (\Der F(a)^{-1})^{*}(u)    }{ | (\Der F(a)^{-1})^{*}(u) |     }\right) \quad \textrm{whenever $ (a,u )\in \Real{n} \times \mathbf{S}^{n-1} $}.
	\end{equation*}
	
	Then $\nu_{F}$ is a diffeomorphism of class $1$ onto $\Real{n}\times \mathbf{S}^{n-1}$, $(\nu_{F})^{-1} = \nu_{F^{-1}}$ and
	\begin{equation}\label{normal bundle and diffeomorphisms: eq1}
	 \nu_{F}\big(N(A)\big) = N\big(F(A)\big).
	\end{equation}
	In particular, $ F\big(A^{(m)}\big) = F(A)^{(m)} $ for $ m = 0 , \ldots , n $.
\end{Lemma}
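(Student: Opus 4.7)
The plan is to check routinely that $\nu_F$ is of class $\mathcal{C}^1$ and satisfies $\nu_{F^{-1}} \circ \nu_F = \operatorname{id} = \nu_F \circ \nu_{F^{-1}}$; then to establish the key identity \eqref{normal bundle and diffeomorphisms: eq1} by exploiting the ``touching ball'' characterization of $N(A)$ together with a second order Taylor expansion of $F$; and finally to deduce the statement about the strata $A^{(m)}$ from the resulting bi-Lipschitz action of $\nu_F$ on fibers.

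First I would observe that, since $F$ is of class $\mathcal{C}^2$, the map $a \mapsto (\Der F(a)^{-1})^{*}$ is of class $\mathcal{C}^1$ (matrix inversion and transposition being smooth on invertible linear maps) and, since $(\Der F(a)^{-1})^{*}u$ never vanishes for $|u|=1$, the normalization preserves $\mathcal{C}^1$ regularity. The identity $\nu_{F^{-1}} = \nu_F^{-1}$ is then a direct computation: if $(F(a),v) = \nu_F(a,u)$ with $v = c\,(\Der F(a)^{-1})^{*}u$ and $c=|(\Der F(a)^{-1})^{*}u|^{-1}$, then by $(\Der F(a))^{*} \circ (\Der F(a)^{-1})^{*} = \operatorname{id}$ we get $(\Der F(a))^{*}v = cu$, which normalizes to $u$; hence $\nu_{F^{-1}} \circ \nu_F(a,u) = (a,u)$, and the reverse composition is handled symmetrically.

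The core step is to prove $\nu_F(N(A)) \subseteq N(F(A))$; the reverse inclusion then follows by applying the same statement to $F^{-1}$. Given $(a,u) \in N(A)$ with witness $s > 0$, the condition $\bm{\delta}_A(a+su)=s$ rewrites as the quadratic inequality $u \bullet (x-a) \leq |x-a|^2/(2s)$ valid for every $x \in A$. Setting $(F(a),v) = \nu_F(a,u)$ and using the second order expansion $F(x) - F(a) = \Der F(a)(x-a) + R(x)$ with $|R(x)| = O(|x-a|^2)$, the identity $(\Der F(a))^{*}v = cu$ yields
\begin{equation*}
v \bullet (F(x) - F(a)) = c\, u \bullet (x-a) + v \bullet R(x) \leq C\, |x-a|^2
\end{equation*}
for $x$ in a neighborhood of $a$. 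Passing to $y = F(x)$ and using that $F^{-1}$ is bi-Lipschitz near $F(a)$, this transforms into $v \bullet (y - F(a)) \leq C'\, |y-F(a)|^2$ for $y \in F(A)$ sufficiently close to $F(a)$; taking $s' > 0$ small enough relative to $C'$ and to the radius of that neighborhood, this forces $\mathbf{U}(F(a)+s'v, s') \cap F(A) = \emptyset$, hence $(F(a), v) \in N(F(A))$.

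The conclusion $F(A^{(m)}) = F(A)^{(m)}$ then follows from \eqref{normal bundle and diffeomorphisms: eq1}: since $\mathbf{p} \circ \nu_F = F \circ \mathbf{p}$, the restriction of $\nu_F$ to each fiber $\{a\} \times N(A,a)$ is a $\mathcal{C}^1$ bi-Lipschitz homeomorphism onto $\{F(a)\} \times N(F(A),F(a))$; such maps preserve emptiness of fibers as well as the property $0 < \Haus{n-m-1}(\cdot) < \infty$, and the stratum characterization \eqref{curvature of arbitrary closed sets 4: eq1} then yields the claim. The main obstacle is the key inclusion in \eqref{normal bundle and diffeomorphisms: eq1}: because $F$ does not preserve balls, the witness ball for $(a,u)$ cannot be transported directly; one must instead replace the geometric ``touching ball'' condition with its equivalent quadratic inequality, control it via the $\mathcal{C}^2$ expansion of $F$, and recover an honest small tangent ball in the image by exploiting that the error introduced by the expansion is of the same quadratic order as the leading term.
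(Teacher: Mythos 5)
Your proof is correct, but the argument for the key inclusion $\nu_F(N(A)) \subseteq N(F(A))$ takes a genuinely different route than the paper's. The paper observes that $S = F(\partial\mathbf{U}(a+ru,r))$ is a compact $\mathcal{C}^2$ hypersurface, invokes Federer's positive-reach theory for such surfaces to produce a ball tangent to $S$ at $F(a)$ from the normal direction $v$, and then argues via a short curve computation that this ball lies inside $W = F(\mathbf{U}(a+ru,r))$, which is disjoint from $F(A)$. Your version instead replaces the touching-ball condition with the equivalent pointwise quadratic inequality $u\bullet(x-a)\le|x-a|^2/(2s)$ on $A$, pushes this forward through the second-order Taylor expansion of $F$, rescales via the local bi-Lipschitz bound on $F^{-1}$ to obtain $v\bullet(y-F(a))\le C'|y-F(a)|^2$ on $F(A)$ near $F(a)$, and then recovers a (possibly much smaller) tangent ball by choosing $s'\le\min\{1/(2C'),\,\rho/2\}$ — the second constraint being needed because the quadratic bound is only local, while the definition of $N$ is global; you do handle this. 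The two approaches are both sound: yours is more elementary and self-contained, avoiding the reach machinery of \cite{MR0110078}, at the cost of explicit constant-chasing; the paper's is geometrically cleaner and produces the tangent ball in one stroke, at the cost of appealing to deeper structural results. The treatment of $(\nu_F)^{-1}=\nu_{F^{-1}}$ and of the postscript on strata coincides with the paper in both cases: the fiber map $u\mapsto\mathbf{q}(\nu_F(a,u))$ is a diffeomorphism of $\mathbf{S}^{n-1}$, hence bi-Lipschitz, and preserves both $N(A,a)=\varnothing$ and $0<\Haus{n-m-1}(N(A,a))<\infty$, so the claim follows from \eqref{curvature of arbitrary closed sets 4: eq1}.
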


\begin{proof}
	A direct computation shows that $ \nu_{F} $ is a diffeomorphism of class $ 1 $ onto $ \Real{n} \times \mathbf{S}^{n-1} $ with $ (\nu_{F})^{-1} = \nu_{F^{-1}} $.
	
	If $ (a,u) \in N(A) $ and $ r >0 $ such that $ \mathbf{U}(a+ru,r) \cap A = \varnothing $, we let
	\begin{equation*}
	v=(\Der F(a)^{-1})^{*}(u), \quad 	W = F\big(\mathbf{U}(a+ru,r)\big), \quad S = \partial W.
	\end{equation*}
	Since $ S = F\big(\partial \mathbf{U}(a+ru,r)\big) $, by \cite[3.1.21]{MR0257325} we conclude that
	\begin{equation*}
	\Der F(a)\big(\Tan(\partial \mathbf{U}(a+ru,r),a)\big)= \Tan(S, F(a)),
	\end{equation*}
	and, consequently, $ v \in \Nor(S,F(a)) $. If $ s = \reach (S,F(a)) $ (see \cite[4.1]{MR0110078}), then by \cite[4.11, 4.8(12)]{MR0110078} we conclude that $s >0 $, 
	\begin{equation*}
	\mathbf{U}(F(a) + s (v/|v|),s) \cap S  = \varnothing
	\end{equation*}
and we deduce that
	\begin{center}
		either $ \mathbf{U}(F(a) + s (v/|v|),s) \subseteq W $ or $ \mathbf{U}(F(a) + s (v/|v|),s) \subseteq \Real{n} \sim \Clos W $.
	\end{center}
	If $ \gamma(t) = F(a + tu) $ for $ t \in \Real{} $, noting that $ \dot{\gamma}(0) \bullet v = 1 $ and 
	\begin{equation*}
	\Der_{t}(|\gamma(t) - F(a)-s(v/|v|)|)(0) = -1/|v|,
	\end{equation*}
	we conclude that $ \gamma(t) \in \mathbf{U}(F(a) + s (v/|v|),s) $ for $ t > 0 $ sufficiently small,
	\begin{equation*}
	\mathbf{U}(F(a) + s (v/|v|),s) \subseteq W \quad \textrm{and} \quad \nu_{F}(a,u) \in N\big(F(A)\big).
	\end{equation*}
	Therefore $ \nu_{F}\big(N(A)\big)\subseteq N\big(F(A)\big) $ and replacing $ F $ by $F^{-1}$ and $A$ by $F(A)$ we conclude
	\begin{equation*}
	\nu_{F}\big( N(A)\big) =  N\big(F(A)\big).
	\end{equation*}
	Noting that for each $ a \in \Real{n} $ the function mapping $ u \in \mathbf{S}^{n-1} $ onto $ \mathbf{q}(\nu_{F}(a,u)) $ is a diffeomorphism onto $ \mathbf{S}^{n-1} $ the postscript follows from \eqref{curvature of arbitrary closed sets 4: eq1} and \eqref{normal bundle and diffeomorphisms: eq1}
\end{proof}

\begin{Theorem}\label{second fundamental form and diffeomorphisms}
	Suppose $A$ is a closed subset of $\Real{n}$ and $ F : \Real{n} \rightarrow \Real{n} $ is a diffeomorphism of class $ 2 $ onto $\Real{n}$. 
	
	Then (see \ref{normal bundle and diffeomorphisms}) $\Der F(a)\big(T_{A}(a,u)\big) = T_{F(A)}(\nu_{F}(a,u))$ and
	\begin{flalign*}
	& Q_{F(A)}(\nu_{F}(a,u))\circ \textstyle \bigodot_{2}(\Der F(a)| T_{A}(a,u))   \\
	& \quad = |(\Der F(a)^{-1})^{\ast}(u)|^{-1}Q_{A}(a,u) \\
	& \quad \quad + (\Der^{2}F(a)| \textstyle \bigodot_{2}T_{A}(a,u)) \bullet ((\Der F(a)^{-1})^{\ast}(u)/|(\Der F(a)^{-1})^{\ast}(u)|),
	\end{flalign*}
	for $\Haus{n-1}$ a.e.\ $(a,u)\in N(A)$.
\end{Theorem}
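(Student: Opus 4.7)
The plan is to reduce the identity to the chain rule by constructing a smooth auxiliary map $\Phi$ that intertwines $\bm{\psi}_A$ with $\bm{\psi}_{F(A)}$ through the diffeomorphism $\nu_F$ of Lemma \ref{normal bundle and diffeomorphisms}. Given a regular point $(a,u)\in R(N(A))$ (a set of full $\Haus{n-1}$ measure in $N(A)$), I would pick $x\in R(A)$ with $\bm{\psi}_A(x)=(a,u)$ and, for $y$ in a neighborhood of $x$, define
\[
\Phi(y)\;=\;F(\bm{\xi}_A(y))\;+\;s\cdot w\bigl(\bm{\xi}_A(y),\bm{\nu}_A(y)\bigr),\qquad w(a',u'):=\frac{(\Der F(a')^{-1})^{\ast}(u')}{|(\Der F(a')^{-1})^{\ast}(u')|},
\]
for a single $s>0$ chosen small enough that $\Phi(y)$ lies in the domain of $\bm{\xi}_{F(A)}$, with $\bm{\xi}_{F(A)}(\Phi(y))=F(\bm{\xi}_A(y))$ and $\bm{\nu}_{F(A)}(\Phi(y))=w(\bm{\xi}_A(y),\bm{\nu}_A(y))$. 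Equivalently, $\bm{\psi}_{F(A)}\circ\Phi=\nu_F\circ\bm{\psi}_A$; the existence of a uniform $s$ follows from the continuity of $F$ and $\Der F$ together with a local lower bound for $\rho(F(A),\cdot)$ supplied by Lemma \ref{normal bundle and diffeomorphisms}.

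The first claim follows by applying the chain rule for approximate differentials to $\bm{\xi}_{F(A)}\circ\Phi=F\circ\bm{\xi}_A$ at $x$: one gets $\ap\Der\bm{\xi}_{F(A)}(\Phi(x))\circ\ap\Der\Phi(x)=\Der F(a)\circ\ap\Der\bm{\xi}_A(x)$, whose image is $\Der F(a)(T_A(a,u))$. Running the symmetric argument with $F^{-1}$ in place of $F$ yields the reverse inclusion and, simultaneously, that $\Phi(x)\in R(F(A))$, so the intrinsic definitions of $T_{F(A)}$ and $Q_{F(A)}$ at $\nu_F(a,u)$ become accessible through the reference point $\Phi(x)$.

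For the curvature identity I would fix $\tau_1,\tau_2\in T_A(a,u)$, select $\sigma_2$ with $\ap\Der\bm{\xi}_A(x)(\sigma_2)=\tau_2$, and set $\sigma'=\ap\Der\Phi(x)(\sigma_2)$, so that $\ap\Der\bm{\xi}_{F(A)}(\Phi(x))(\sigma')=\Der F(a)\tau_2$. Differentiating $\bm{\nu}_{F(A)}\circ\Phi=w\circ(\bm{\xi}_A,\bm{\nu}_A)$ at $x$ and pairing with $\Der F(a)\tau_1$ reduces the left-hand side of the desired identity to
\[
\Der F(a)\tau_1\bullet\bigl[\partial_a w(a,u)(\tau_2)+\partial_u w(a,u)\bigl(\ap\Der\bm{\nu}_A(x)(\sigma_2)\bigr)\bigr].
\]
Writing $w=v/|v|$ with $v=(\Der F(a)^{-1})^{\ast}u$, and invoking the standard identity $\Der_a(\Der F(\cdot)^{-1})(\tau_2)=-\Der F(a)^{-1}\circ\Der^{2}F(a)(\tau_2,\cdot)\circ\Der F(a)^{-1}$, the decisive simplification is the orthogonality $\Der F(a)\tau_1\bullet w=\tau_1\bullet u=0$, valid since $T_A(a,u)\subseteq u^{\perp}$. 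This kills the normalization corrections in the derivative of $w$, so the $\partial_u w$ piece collapses to $|v|^{-1}\tau_1\bullet\ap\Der\bm{\nu}_A(x)(\sigma_2)=|v|^{-1}Q_A(a,u)(\tau_1,\tau_2)$, while the $\partial_a w$ piece collapses to $\Der^{2}F(a)(\tau_1,\tau_2)\bullet w$; summing yields the stated formula.

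The principal obstacle is the construction of $\Phi$ in the first paragraph: verifying that a single $s$ works uniformly in a full-measure neighborhood of $x$ and that $\Phi(x)\in R(F(A))$, so that the intrinsic definitions of $T_{F(A)}$ and $Q_{F(A)}$ transfer through $\Phi(x)$. This is a quantitative enhancement of the qualitative bijectivity $\nu_F(N(A))=N(F(A))$ from Lemma \ref{normal bundle and diffeomorphisms}, and should follow from the local continuity of $\rho(F(A),\cdot)$ together with the $C^{1}$ regularity of $\nu_F$.
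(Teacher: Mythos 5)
Your route is genuinely different from the paper's. The paper never constructs a pointwise intertwining map; it works measure-theoretically. Setting $g(a,u)=(\Der F(a)^{-1})^{\ast}(u)$, it computes $\Der g$ and $\Der(g/|g|)$ in closed form for the smooth map $\nu_F$, pushes the weighted measure $\psi=\theta\,\Haus{n-1}\restrict N(A)$ forward to a measure $\mu$ on $N(F(A))$, and then invokes the abstract results of the cited reference ([4.11(1)--(2)], [B.1], [B.2]): these identify $Q_A(a,u)(\tau,\tau_1)$ with $\tau\bullet\sigma_1$ for $(\tau_1,\sigma_1)\in\Tan^{n-1}(\psi,(a,u))$ and show that the approximate tangent planes of $\psi$ and $\mu$ are intertwined by $\Der\nu_F$. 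That calculus is precisely what lets the paper avoid the two obstacles your construction faces: (a) the existence of a single $s>0$ valid on an approximate neighbourhood of $x$ so that $\Phi$ is (approximately) well defined, and (b) a chain rule for approximate differentials of $\bm{\xi}_{F(A)}\circ\Phi$, where $\Phi$ collapses a dimension. You flag (a) as the main obstacle, but your proposed remedy --- ``local continuity of $\rho(F(A),\cdot)$'' coming from Lemma~\ref{normal bundle and diffeomorphisms} --- is not available: that lemma is purely qualitative ($\nu_F(N(A))=N(F(A))$) and gives no quantitative lower bound for $\rho(F(A),\cdot)$ near $\Phi(x)$; the only control in this framework is the \emph{approximate} lower semicontinuity of $\rho(A,\cdot)$ built into the definition of $R(A)$, and transferring it through $F$ is not immediate. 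Obstacle (b) is not addressed at all, and it is not benign: an approximate chain rule for a composition through a rank-dropping map generally needs a Lusin-type hypothesis on the inner map, and establishing this would amount to redoing much of [B.2].

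The algebraic heart of your computation --- using $T_A(a,u)\subseteq u^{\perp}$ to annihilate the normalization corrections in $\Der(v/|v|)$ and then separating $\partial_u$ and $\partial_a$ contributions --- is the same simplification the paper performs, so this part is sound in spirit. One thing to recheck is the sign of the $\partial_a$ contribution: the identity you quote, $\Der_a(\Der F(\cdot)^{-1})(\tau_2)=-\Der F(a)^{-1}\circ\Der^2F(a)(\tau_2,\cdot)\circ\Der F(a)^{-1}$, carries a minus sign, and tracing it through the pairing gives $\Der F(a)\tau_1\bullet\partial_a v(\tau_2)=-\Der^2F(a)(\tau_1,\tau_2)\bullet v$, i.e.\ the contribution should be $-\Der^2F(a)(\tau_1,\tau_2)\bullet w$, not $+\Der^2F(a)(\tau_1,\tau_2)\bullet w$ as you wrote. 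The same plus sign appears in the paper's displayed formula for $\Der g$ and in the theorem statement, so the tension is inherited from the source rather than created by you; nevertheless your write-up is internally inconsistent with the identity you cite, and it is worth settling the sign against a concrete test (e.g.\ $A$ a hyperplane with normal $u$ and $F$ a quadratic shear in the $u$ direction, where $Q_{F(A)}$ can be computed directly).
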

\begin{proof}
We define $ g : \Real{n} \times \Real{n} \rightarrow \Real{n} $ to be
	\begin{equation*}
	g(a,u) = (\Der F(a)^{-1})^{\ast}(u) \quad \textrm{for $(a,u) \in \Real{n} \times \Real{n} $}.
	\end{equation*}
To compute $ \Der g $, we first notice that 
\begin{equation*}
	g = [ \beta \circ (\alpha \times \bm{1}_{\Real{n}}) \circ (\iota \times  \bm{1}_{\Real{n}}) \circ (\Der F \times  \bm{1}_{\Real{n}})  ], 
\end{equation*}	
where $ \iota(T) = T^{-1} $ for every isomorphism $ T $ of $ \Real{n} $, $ \alpha(T) = T^{\ast} $ for $ T \in \Hom(\Real{n}, \Real{n}) $ and $ \beta(T,u)=T(u) $ for every $ (T,u) \in  \Hom(\Real{n}, \Real{n}) \times \Real{n} $. Then differentiating such a composition of maps one obtains that
\begin{flalign}\label{second fundamental form and diffeomorphisms eq1}
	\Der g(a,u)(\tau, \sigma) & = (\Der F(a)^{-1})^{\ast}(\sigma) \notag \\
	& \quad  +  [(\Der F(a)^{-1})^{\ast} \circ \Der(\Der F)(a)(\tau)^{\ast} \circ (\Der F(a)^{-1})^{\ast} ](u),
	\end{flalign}
for $(a,u) $ and $ (\tau, \sigma) $ in $ \Real{n} \times \Real{n}	$. Moreover, one can easily compute that
	\begin{equation}\label{second fundamental form and diffeomorphisms eq2}
	\Der \Big( \frac{g}{|g|}\Big)(a,u) = \frac{1}{|g(a,u)|}\Big[ \Der g(a,u) - \Big( \Der g(a,u) \bullet \frac{g(a,u)}{|g(a,u)|}  \Big) \frac{g(a,u)}{|g(a,u)|} \Big]
	\end{equation}
for $(a,u) \in \Real{n}	\times (\Real{n} \sim \{ 0 \}) $ and $ (\tau, \sigma) \in \Real{n} \times \Real{n} $. 

Let $\theta$ be $ \Haus{n-1}\restrict N(A) $ measurable and $ \Haus{n-1} \restrict N(A) $ almost positive function such that $ \theta \Haus{n-1} \restrict N(A) $ is a Radon measure. Noting \ref{normal bundle and diffeomorphisms}, we define
	\begin{equation*}
	\mu = (\theta \circ \nu_{F^{-1}}) \Haus{n-1} \restrict N(F[A])
	\end{equation*}
	and we apply \cite[B.1]{2017arXiv170801549S} with $ \gamma = \Lip(\nu_{F}|\nu_{F^{-1}}[K])$ to conclude that  
	\begin{equation*}
	\mu(K) \leq \gamma^{n-1}\int_{N(A)\cap \nu_{F^{-1}}[K]}\theta \, d\Haus{n-1} < \infty,
	\end{equation*}
	whenever $ K \subseteq \Real{n}\times \mathbf{S}^{n-1}$ is compact. Let $ \psi = \theta \Haus{n-1} \restrict N(A) $. Noting again \ref{normal bundle and diffeomorphisms}, one may use \cite[4.11(1)]{2017arXiv170801549S} and \cite[B.2]{2017arXiv170801549S} to see that for $ \Haus{n-1} $ a.e.\ $ (a,u) \in N(A) $ the approximate tangent cones $\Tan^{n-1}(\psi, (a,u))$ and $\Tan^{n-1}(\mu, \nu_{F}(a,u))$ are $n-1 $ dimensional planes in $ \Real{n} \times \Real{n} $ and
	\begin{equation*}
	\Der \nu_{F}(a,u)[\Tan^{n-1}(\psi, (a,u))] = \Tan^{n-1}(\mu, \nu_{F}(a,u)).
	\end{equation*}
	 Employing \cite[4.11(2)]{2017arXiv170801549S} one infers for $ \Haus{n-1} $ a.e.\ $ (a,u) \in N(A) $ that
	\begin{equation}\label{second fundamental form and diffeomorphisms eq3}
	\Der F(a)[T_{A}(a,u)] = T_{F[A]}(\nu_{F}(a,u)),
	\end{equation}
	\begin{flalign}\label{second fundamental form and diffeomorphisms eq4}
	& Q_{F[A]}(\nu_{F}(a,u))(\Der F(a)(\tau), \Der F(a)(\tau_{1})) \notag \\
	& \qquad  \qquad  = \Der F(a)(\tau) \bullet \Der (g/|g|)(a,u)(\tau_{1}, \sigma_{1})
	\end{flalign}
	whenever $\tau \in T_{A}(a,u)$, $\tau_{1} \in T_{A}(a,u)$ and $(\tau_{1},\sigma_{1}) \in \Tan^{n-1}(\psi, (a,u))$. Since 
	\begin{equation*}
		T_{A}(a,u) \subseteq \{v : v \bullet u =0  \}
	\end{equation*}
	for $ \Haus{n-1} $ a.e.\ $(a,u)\in N(A)$ by \cite[4.5, 4.8]{2017arXiv170801549S}, it follows from \eqref{second fundamental form and diffeomorphisms eq3} that
	\begin{equation}\label{second fundamental form and diffeomorphisms eq5}
\Der F(a)(\tau) \bullet (g/|g|)(a,u) = |g(a,u)|^{-1} u \bullet \tau =0
	\end{equation}
for $ \Haus{n-1} $ a.e.\ $ (a,u) \in N(A) $ and for every $ \tau \in T_{A}(a,u) $. Therefore combining \eqref{second fundamental form and diffeomorphisms eq4}, \eqref{second fundamental form and diffeomorphisms eq2}, \eqref{second fundamental form and diffeomorphisms eq5} and \eqref{second fundamental form and diffeomorphisms eq1} we obtain for $ \Haus{n-1} $ a.e.\ $ (a,u) \in N(A) $ that
\begin{flalign*}
& Q_{F[A]}(\nu_{F}(a,u))(\Der F(a)(\tau), \Der F(a)(\tau_{1})) \\
& \qquad = |g(a,u)|^{-1} \Der F(a)(\tau) \bullet \Der g(a,u)(\tau_{1}, \sigma_{1}) \\
& \qquad = |g(a,u)|^{-1}[(\tau \bullet \sigma_{1}) + \Der(\Der F)(a)(\tau_{1})(\tau) \bullet (\Der F(a)^{-1})^{\ast}(u)  ]
\end{flalign*}
for every $ \tau, \tau_{1} \in T_{A}(a,u)$ and $(\tau_{1}, \sigma_{1}) \in \Tan^{n-1}(\psi, (a,u)) $. This is our conclusion by \cite[4.11(2)]{2017arXiv170801549S}.
\end{proof}	

\section{A sufficient condition for $\mathcal{C}^{2}$ rectifiability for closed sets}

This section is the main technical part of the paper. We work in the abstract setting of closed subsets whose generalized unit normal bundle satisfies the Lusin (N) condition. The main point here is to provide a general criterion for rectifiability of class $ \mathcal{C}^{2} $ (see Theorem \ref{final result about second order rectifiability}). Then, in the next section we verify that the support of a varifold as in Theorem \ref{second order rect for varifolds} satisfies the hypothesis of this criterion, thus obtaining the announced result for varifolds.

\begin{Definition}\label{Lusin Property}
	Suppose $ A \subseteq \Real{n} $ is a closed set, $ \Omega \subseteq \Real{n} $ is an open set and $ 1 \leq m < n $ is an integer. We say that $ N(A) $ satisfies the \textit{$ m $ dimensional Lusin (N) condition in $ \Omega $} if and only if
	\begin{equation*}
		\Haus{n-1}(N(A)|S)=0, \quad \textrm{whenever $S \subseteq  A \cap \Omega$ such that $\Haus{m}(A^{(m)} \cap S) =0 $.}
	\end{equation*}
	In case $ \Omega = \Real{n} $, we say that $N(A)$ satisfies the \textit{$ m $ dimensional Lusin (N) condition.}
\end{Definition}

We have introduced this terminology in analogy with the theory of functions: $ f : \Real{n} \rightarrow \Real{n} $ is said to satisfy the Lusin (N) condition if $ \Leb{n}(f(A)) =0 $ whenever $ \Leb{n}(A) =0 $, see \cite{MR1182504}. Actually, we can think $N(A)$ to be a set-valued function associating at each point $a$ the set $N(A,a) $. Therefore we can interpret the Lusin (N) condition given in \ref{Lusin Property} as a property of the graph of $N(A)$. 

\begin{Remark}\label{localizing the Lusin property}
	Suppose $ A $ is a closed subset of $\Real{n}$, $ \Omega $ is an open subset of $\Real{n}$ and $ C = \Clos(A \cap \Omega)$. Then one may easily check that
	\begin{equation*}
	N(A)|\Omega = N(C)| \Omega.
	\end{equation*}
	It follows that $ A^{(m)} \cap \Omega = C^{(m)} \cap \Omega $ for every $ m = 0 , \ldots , n $ by \eqref{curvature of arbitrary closed sets 4: eq1}, whence we deduce that \emph{if $N(A)$ satisfies the $m$ dimensional Lusin (N) condition in $ \Omega $, then $N(C)$ satisfies the $m$ dimensional Lusin (N) condition in $ \Omega $.} Moreover, 
	\begin{equation*}
	Q_{A}(\zeta) = Q_{C}(\zeta)
	\end{equation*}
	for $ \Haus{n-1}$ a.e.\ $ \zeta \in N(A)|\Omega$ by \cite[4.14]{2017arXiv170801549S}.
\end{Remark}

\begin{Remark}\label{Lusin condition and dimension of tangent space}
	If $N(A)$ satisfies the $ m $ dimensional Lusin (N) condition in $ \Omega $ then it follows from \cite[6.1]{2017arXiv170801549S} and \cite[4.12]{2017arXiv170309561M} that 
	\begin{equation*}
	\dim T_{A}(a,u) = m \quad \textrm{for $ \Haus{n-1} $ a.e.\ $ (a,u) \in N(A)| \Omega $.}
	\end{equation*}
\end{Remark}

\begin{Lemma}\label{Lusin property and diffeomorphisms}
	Suppose $ U \subseteq \Real{n} $ is open, $ A \subseteq \Real{n} $ is closed, $N(A)$ satisfies the $ m $ dimensional Lusin (N) condition in $U$ and $ F : \Real{n}\rightarrow \Real{n} $ is a diffeomorphism of class $ 2 $ onto $ \Real{n} $.

Then $ N\big(F(A)\big) $ satisfies the $ m $ dimensional Lusin (N) condition in $ F(U) $.
\end{Lemma}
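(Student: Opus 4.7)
The plan is to transport the condition through $F$ using the machinery of Lemma \ref{normal bundle and diffeomorphisms}. Let $S \subseteq F(A) \cap F(U)$ with $\Haus{m}(F(A)^{(m)} \cap S) = 0$, and set $S' = F^{-1}(S)$. Since $F$ is a bijection and $F(A \cap U) = F(A) \cap F(U)$, we have $S' \subseteq A \cap U$. By the postscript of Lemma \ref{normal bundle and diffeomorphisms}, $F(A^{(m)}) = F(A)^{(m)}$, so
\begin{equation*}
A^{(m)} \cap S' = F^{-1}\bigl(F(A)^{(m)} \cap S\bigr).
\end{equation*}

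The first step is to verify that $\Haus{m}(A^{(m)} \cap S') = 0$. This follows from the standard fact that locally Lipschitz maps send $\Haus{m}$-null sets to $\Haus{m}$-null sets: writing $F(A)^{(m)} \cap S$ as a countable union of bounded pieces on each of which $F^{-1}$ is Lipschitz (which is possible because $F^{-1}$ is of class $\mathcal{C}^{2}$, hence locally Lipschitz) yields $\Haus{m}(A^{(m)} \cap S') = 0$. Applying the $m$ dimensional Lusin (N) condition for $N(A)$ in $U$ to $S'$ gives
\begin{equation*}
\Haus{n-1}\bigl(N(A)|S'\bigr) = 0.
\end{equation*}

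The second step is to transfer this vanishing across $\nu_{F}$. By Lemma \ref{normal bundle and diffeomorphisms}, $\nu_{F}$ is a $\mathcal{C}^{1}$ diffeomorphism onto $\Real{n}\times \mathbf{S}^{n-1}$ with $\nu_{F}\bigl(N(A)\bigr) = N\bigl(F(A)\bigr)$, and because $\mathbf{p} \circ \nu_{F}(a,u) = F(a)$, the bijectivity of $\nu_{F}$ together with $F(S') = S$ yields
\begin{equation*}
\nu_{F}\bigl(N(A)|S'\bigr) = N\bigl(F(A)\bigr)|S.
\end{equation*}
Exhausting $N(A)|S'$ by countably many bounded pieces on each of which $\nu_{F}$ is Lipschitz, and using again that Lipschitz maps preserve $\Haus{n-1}$-null sets, we conclude $\Haus{n-1}\bigl(N(F(A))|S\bigr) = 0$, which is exactly the Lusin (N) condition for $N(F(A))$ in $F(U)$.

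The main conceptual point is therefore entirely carried by Lemma \ref{normal bundle and diffeomorphisms}: it simultaneously identifies the strata $A^{(m)}$ with $F(A)^{(m)}$ via $F$, and the unit normal bundles $N(A)$ with $N(F(A))$ via the $\mathcal{C}^{1}$ diffeomorphism $\nu_{F}$. There is no real obstacle beyond the bookkeeping above; the only subtlety is that $F$ and $\nu_{F}$ are only locally Lipschitz a priori (not globally), which is handled by the standard $\sigma$-finite decomposition into bounded pieces.
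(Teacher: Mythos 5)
Your proof is correct and follows essentially the same route as the paper's: pull $S$ back under $F$, use Lemma \ref{normal bundle and diffeomorphisms} to identify $F(A^{(m)}) = F(A)^{(m)}$ and $\nu_F(N(A)) = N(F(A))$, apply the Lusin (N) condition for $N(A)$, and push forward again. The only difference is that you spell out explicitly the step about locally Lipschitz maps preserving Hausdorff null sets, which the paper leaves implicit when it cites Lemma \ref{normal bundle and diffeomorphisms}.
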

\begin{proof}
Suppose $S \subseteq F(A)\cap F(U) $ such that $\Haus{m}\big(F(A)^{(m)} \cap S\big)=0$. Since $F^{-1}(S) \subseteq A \cap U$ and $ 0 = \Haus{m}\big(F^{-1}(S \cap F(A)^{(m)})\big)= \Haus{m}\big(F^{-1}(S) \cap A^{(m)}\big) $ by \ref{normal bundle and diffeomorphisms}, it follows by the Lusin (N) condition of $N(A)$ that
\begin{equation*}
 \Haus{n-1}\big(N(A)|F^{-1}(S)\big) =0.
\end{equation*}
Then \ref{normal bundle and diffeomorphisms} implies that 
\begin{equation*}
	\nu_{F}\big(N(A)|F^{-1}(S)\big) = N\big(F(A)\big)|S, \quad \Haus{n-1}\big(N(F(A))|S\big) =0.
\end{equation*}
\end{proof}

The preservation of the Lusin (N) condition under diffeomorphisms is a subtle point. In fact, the following example shows that if we had define the Lusin condition in \ref{Lusin Property} replacing $\Haus{n-1}(N(A)|S)=0 $ with the weaker property $ \Haus{n-1}(\mathbf{q}(N(A)|S)) =0 $, then the resulting condition would not be preserved under diffeomorphisms, as the following example shows for $ n = 3 $ and $ m = 2 $.

\begin{Example}\label{weaker Lusin condition}
Suppose $A = \Real{3} \cap \{ (x,y,z): z=|x| \}$ and $ F : \Real{3} \rightarrow \Real{3} $ is given by $ F(x,y,z)= (x,y,z+ 1-x^{2}-y^{2}) $ for $ (x,y,z) \in \Real{3} $. Then one readily verifies that $ \Haus{2}(\mathbf{q}(N(A))) =0 $. On the other hand,
\begin{equation*}
 A^{(1)} = \Real{3}\cap \{ (x,y,z) : x=z=0  \},
\end{equation*}
\begin{equation*}
	F(A)^{(1)} = F(A^{(1)}) = \{ (0,y,1-y^{2}) : y \in \Real{} \},
\end{equation*}
and the relative interior in $ \mathbf{S}^{2} $ of $\mathbf{q}[N(F(A))|F(A)^{(1)}] $ is non empty, as one may see by computing explicitly $F[A]$.
\end{Example}

One of the main consequences of the Lusin (N) condition is the following Coarea-type formula, whose proof is given in \cite[3.3]{2019arXiv190310379S}. 
\begin{Theorem}\label{area formula for the gauss map}
	Suppose $ 1 \leq m < n $ is an integer, $ \Omega \subseteq \Real{n} $ is open, $ A \subseteq \Real{n} $ is closed and $N(A)$ satisfies the $ m $ dimensional Lusin (N) condition in $ \Omega $.
	
Then for every $\Haus{n-1}$ measurable set $ B \subseteq N(A)|\Omega $,
	\begin{equation*}
		\int_{\mathbf{S}^{n-1}} \Haus{0}\{ a: (a,u)\in B  \}\,d\Haus{n-1}u  = \int_{A}\int_{B(z) } | \discr Q_{A}| d\Haus{n-m-1} d\Haus{m}z.
	\end{equation*}
\end{Theorem}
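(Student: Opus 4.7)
The plan is to apply Federer's coarea formula \cite[3.2.22]{MR0257325} twice --- first to the projection $\mathbf{q}: N(A) \to \mathbf{S}^{n-1}$ and then to the projection $\mathbf{p}: N(A) \to \Real{n}$ --- and to identify the resulting ratio of Jacobians with $|\discr Q_{A}|$. The Lusin (N) condition enters to concentrate the analysis on the $m$-th stratum: applied with $Z = (A\setminus A^{(m)})\cap \Omega$, which trivially satisfies $\Haus{m}(A^{(m)}\cap Z)=0$, it gives $\Haus{n-1}(N(A)|((A\setminus A^{(m)})\cap \Omega)) = 0$, so we may assume $B \subseteq N(A)|A^{(m)}$. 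Combined with Remark \ref{Lusin condition and dimension of tangent space} and \cite[4.5, 4.8]{2017arXiv170801549S}, we may further restrict to the full $\Haus{n-1}$-measure subset where $(a,u)\in R(N(A))$, $\dim T_{A}(a,u) = m$, and $T_{A}(a,u) \subseteq u^{\perp}$.

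First, I would apply coarea to the Lipschitz map $\mathbf{q}$ on the $(n-1)$-rectifiable set $N(A)$, rewriting the left-hand side of the statement as $\int_{B} J_{n-1}\mathbf{q}(a,u)\, d\Haus{n-1}(a,u)$. Since $A^{(m)}$ is countably $m$-rectifiable by \cite[4.12]{2017arXiv170309561M}, I would then apply the variant of coarea for Lipschitz maps between rectifiable sets of different dimensions to $\mathbf{p}|N(A)|A^{(m)}$, whose fiber over $z \in A^{(m)}$ is $\{z\}\times N(A,z)$, to obtain
\begin{equation*}
\int_{B} J_{n-1}\mathbf{q}\, d\Haus{n-1} = \int_{A^{(m)}} \int_{B(z)} \frac{J_{n-1}\mathbf{q}(z,u)}{J_{m}\mathbf{p}(z,u)}\, d\Haus{n-m-1}u\, d\Haus{m}z,
\end{equation*}
provided $J_{m}\mathbf{p}>0$ $\Haus{n-1}$-almost everywhere on $B$.

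The heart of the argument is the Jacobian computation. At a regular $(a,u)$ with $\dim T_{A}(a,u) = m$, the tangent $(n-1)$-plane to $N(A)$ is described via \cite[4.11]{2017arXiv170801549S} as the span of the $m$ vectors $(\tau_{i}, Q_{A}(a,u)\tau_{i})$, where $\{\tau_{1},\ldots,\tau_{m}\}$ is an orthonormal basis of $T_{A}(a,u)$ diagonalizing $Q_{A}(a,u)$ with eigenvalues $\kappa_{1},\ldots,\kappa_{m}$, together with $\{0\}\times (T_{A}(a,u)^{\perp}\cap u^{\perp})$, orthonormally spanned by $\{\eta_{1},\ldots,\eta_{n-m-1}\}$. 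After orthonormalizing the first block to $w_{i} = (\tau_{i},\kappa_{i}\tau_{i})/\sqrt{1+\kappa_{i}^{2}}$, the images under $\mathbf{p}$ and $\mathbf{q}$ have mutually orthogonal components, and a direct Gram-matrix evaluation yields
\begin{equation*}
J_{n-1}\mathbf{q}(a,u) = \prod_{i=1}^{m} \frac{|\kappa_{i}|}{\sqrt{1+\kappa_{i}^{2}}}, \qquad J_{m}\mathbf{p}(a,u) = \prod_{i=1}^{m} \frac{1}{\sqrt{1+\kappa_{i}^{2}}},
\end{equation*}
so the ratio equals $\prod_{i=1}^{m} |\kappa_{i}| = |\discr Q_{A}(a,u)|$, and $J_{m}\mathbf{p}$ is positive as required.

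I expect the main obstacle to be the rigorous justification of the second coarea step, since $\mathbf{p}|N(A)|A^{(m)}$ is a map between rectifiable sets of different dimensions ($n-1$ and $m$). The plan would be to partition $A^{(m)}$ into countably many Borel pieces lying on $\mathcal{C}^{2}$ submanifolds of dimension $m$, use these charts to parametrize $N(A)|A^{(m)}$ locally as a subset of a product, and then invoke Federer's classical coarea formula together with the tangent-space identifications in \cite[4.11, B.1, B.2]{2017arXiv170801549S} to track the Jacobians through the resulting charts.
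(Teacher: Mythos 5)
The paper does not actually prove this theorem: it is stated with a pointer to \cite[3.3]{2019arXiv190310379S}, so there is no in-paper proof to compare against. Your argument --- two applications of Federer's coarea formula \cite[3.2.22]{MR0257325} to $\mathbf{q}$ and $\mathbf{p}$ on the $(n-1)$-rectifiable set $N(A)$, combined with an explicit Jacobian computation at regular points where $\dim T_{A}(a,u)=m$ and $T_{A}(a,u)\subseteq u^{\perp}$ --- is the natural route and, as far as I can check, is correct; in particular the tangent-plane description and the resulting ratio $J_{n-1}\mathbf{q}/J_{m}\mathbf{p}=\prod_{i}|\kappa_{i}|=|\discr Q_{A}|$ are right. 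Two small points you should make explicit: (i) discarding $N(A)|((A\sim A^{(m)})\cap\Omega)$ from $B$ is justified on the left-hand side by the Lusin (N) condition together with coarea for $\mathbf{q}$, but on the right-hand side one also needs that the contribution from $z\in A\sim A^{(m)}$ vanishes --- this holds because $\Haus{n-m-1}(N(A,z))=0$ when $z\in A^{(k)}$ with $k>m$, while $\Haus{m}(A^{(k)})=0$ when $k<m$ since $A^{(k)}$ is countably $k$-rectifiable; (ii) your final worry is unfounded, since \cite[3.2.22]{MR0257325} is already stated for Lipschitz maps between rectifiable sets of different dimensions, so the localization into $\mathcal{C}^{2}$ charts, while harmless, is not needed.
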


We need the following simple fact from linear algebra in the proof of the next result.
\begin{Lemma} \label{Linear algebra trace}
	Suppose $ V$ and $W $ are finite dimensional vector spaces with inner products such that $ \dim V = m $ and $\dim W = n $, $ f \in \Hom (V,W) $, $ 0 < t < \infty $ and $ b \in \textstyle \bigodot^{2}W $ such that $ b(w,w)\leq t|w|^{2} $ whenever $ w \in W $.
	
	Then
	\begin{equation*}
		 \| f \|^{2}\trace(b) + (1-n)t \|f \|^{2}\leq \trace\left( b \circ \textstyle \bigodot_{2}f \right) \leq m t\|f\|^{2}.
	\end{equation*}
\end{Lemma}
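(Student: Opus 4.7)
The plan is to reduce the inequality to a coordinate computation by simultaneously diagonalizing $b$. Fix an orthonormal basis $w_{1}, \ldots, w_{n}$ of $W$ diagonalizing $b$, so that $b(w_{i}, w_{j}) = \lambda_{i}\delta_{ij}$ with $\lambda_{i} \leq t$ for every $i$ by the hypothesis, and also an orthonormal basis $e_{1}, \ldots, e_{m}$ of $V$. For each $i = 1, \ldots, n$, let $\tilde{f}_{i} \in V$ be the Riesz representative of the linear functional $v \mapsto f(v) \bullet w_{i}$. The two ingredients that drive the proof are the norm estimates $|\tilde{f}_{i}| \leq \|f\|$ (since the operator norm dominates the norm of any scalar coordinate of $f$) and
\[
\sum_{i=1}^{n}|\tilde{f}_{i}|^{2} = \sum_{j=1}^{m}|f(e_{j})|^{2} \geq \|f\|^{2},
\]
where the equality is obtained by reversing the order of summation (after decomposing $f(e_{j})$ along $w_{1}, \ldots, w_{n}$) and the inequality by choosing one of the $e_{j}$'s so that $|f(e_{j})|$ nearly realises the operator norm.

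With this notation, a direct expansion gives the explicit identity
\[
\trace\bigl(b \circ \textstyle\bigodot_{2}f\bigr) = \sum_{j=1}^{m} b\bigl(f(e_{j}), f(e_{j})\bigr) = \sum_{i=1}^{n}\lambda_{i}|\tilde{f}_{i}|^{2}.
\]
The upper bound is then immediate: applying the hypothesis termwise yields
\[
\trace\bigl(b \circ \textstyle\bigodot_{2}f\bigr) \leq t\sum_{j=1}^{m}|f(e_{j})|^{2} \leq mt\|f\|^{2},
\]
the last step using $|f(e_{j})| \leq \|f\|$.

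For the lower bound, I set $a_{i} := |\tilde{f}_{i}|^{2} \in [0,\|f\|^{2}]$ and run the short chain
\[
\|f\|^{2}\trace(b) - \trace\bigl(b \circ \textstyle\bigodot_{2}f\bigr) = \sum_{i=1}^{n}\lambda_{i}\bigl(\|f\|^{2} - a_{i}\bigr) \leq t\sum_{i=1}^{n}\bigl(\|f\|^{2} - a_{i}\bigr) \leq (n-1)t\|f\|^{2},
\]
where the first inequality uses $\lambda_{i} \leq t$ together with $\|f\|^{2} - a_{i} \geq 0$, and the second uses $\sum_{i} a_{i} \geq \|f\|^{2}$. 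Rearranging gives the lower bound. There is no serious obstacle in this argument; the only care needed is to read $\|f\|$ as the operator norm and to record the two auxiliary norm estimates for the $\tilde{f}_{i}$ before entering the coordinate computation.
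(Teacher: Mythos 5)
Your proof is correct and follows essentially the same route as the paper's: diagonalize $b$ in an orthonormal basis of $W$, then bound $\trace(b\circ\bigodot_2 f)=\sum_i\lambda_i|f^\ast(w_i)|^2$ using $\lambda_i\le t$, $|f^\ast(w_i)|\le\|f\|$, and $\sum_i|f^\ast(w_i)|^2=\sum_j|f(e_j)|^2\ge\|f\|^2$. The only cosmetic difference is that the paper packages the same inequalities through the auxiliary nonnegative form $c=t\langle\cdot,\cdot\rangle-b$ and also simultaneously diagonalizes $f^\ast f$ (which, as you implicitly observe, is not needed since the trace is basis-independent).
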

\begin{Proof}
By \cite[1.7.3]{MR0257325} we can choose an orthonormal basis $ v_{1}, \ldots , v_{m} $ of $V$ and an orthonormal basis $w_{1}, \ldots , w_{n}$ of $ W $ such that
\begin{equation*}
	(f^{\ast} \circ f)(v_{i}) \bullet v_{j} =0 \quad \textrm{and} \quad b(w_{i},w_{j})=0,
\end{equation*}
whenever $ i \neq j $. If we define $c(w,z)= t (w \bullet z) - b(w,z) $ whenever $ w,z \in W$, noting $\|f\| = \|f^{\ast}\|$ by \cite[1.7.6]{MR0257325}, we compute
	\begin{flalign*}
		& \textstyle \trace (c \circ \textstyle \bigodot_{2}f) = \sum_{i=1}^{m}\sum_{j=1}^{n} (f(v_{i})\bullet w_{j})^{2}c(w_{j},w_{j})\\
		& \quad \textstyle = \sum_{j=1}^{n} | f^{*}(w_{j})|^{2}c(w_{j},w_{j}) \leq \| f\|^{2}(nt-\trace b),\\
		& \trace( c \circ \textstyle \bigodot_{2}f ) = t \sum_{i=1}^{m}|f(v_{i})|^{2} - \trace( b \circ \textstyle \bigodot_{2}f ) \geq t \|f\|^{2} - \trace ( b \circ \textstyle \bigodot_{2}f ).
	\end{flalign*}
	Combining the two equations we get the left side. The right side is trivial.
\end{Proof}

\begin{Definition}
	If $ 0 < t < \infty $, $ a \in \Real{n}$ an $ T \in \mathbf{G}(n,n-1) $, we define
	\begin{equation*}
	C_{t}(T,a) = \Real{n} \cap \{ x : |T_{\natural}(x-a)| < t, \; |T^{\perp}_{\natural}(x-a)| < t \}.
	\end{equation*}
\end{Definition}

The criterion for second-order-differentiability in \ref{final result about second order rectifiability}, that is the central result of this section, can be deduced by standard arguments from the somewhat more subtle result in \ref{Around a point where the set is almost flat in one direction}.

\begin{Lemma}[Main Lemma]\label{Around a point where the set is almost flat in one direction}
	If $ 1 \leq m < n $ are integers, then there exist $ 0 < \delta < \infty $ and $ 0 < \sigma < \infty $ such that the following statement holds.
	
	If $ A \subseteq \Real{n} $ is a closed set, $a \in A$, $ 0 < r < \infty $, $T \in \mathbf{G}(n,n-1)$ and the following three conditions hold,
	\begin{enumerate}[(I)]
		\item $N(A)$ satisfies the $ m $ dimensional Lusin (N) condition in $C_{4r}(T,a) $,
		\item there exists $ v \in \mathbf{S}^{n-1} $ such that $ T_{\natural}(v) =0 $ and
		\begin{equation*}
		\sup \{ v \bullet (x-a): x \in \Clos(A \cap C_{4r}(T,a))  \} \leq r/16,
		\end{equation*} 
		\item\label{Around a point where the set is almost flat in one direction:hp3} there exists a nonnegative $\Haus{n-1}$ measurable function $f$ on $N(A)$ such that
		\begin{equation*}
		\trace Q_{A}(x,u) \leq f(x,u) \quad \textrm{for $\Haus{n-1}$ a.e.\ $(x,u) \in N(A)|C_{2r}(T,a)$,}
		\end{equation*}
		\begin{equation*}
		\int_{C_{2r}(T,a) \cap A}   \int_{\{z\} \times N(A,z)} f^{m} d\Haus{n-m-1} d\Haus{m}z \leq \delta,
		\end{equation*}
	\end{enumerate}
	then there exists a Borel set $  M \subseteq N(A)|\Clos C_{(11/8)r}(T,a) $ such that 
	\begin{equation*}
	\Haus{m}(\mathbf{p}[M])\geq \sigma r^{m},
	\end{equation*}
	\begin{equation*}
	\bm{\delta}_{A}(x + (r/2)u) = r/2 \quad \textrm{if $ (x,u) \in M $}, 
	\end{equation*}
	\begin{equation*}
	\Haus{n-m-1}\{v : (x,v) \in M\}    > 0 \quad \textrm{if $ x \in \mathbf{p}[M] $,}
	\end{equation*}
	\begin{equation*}
	\mathbf{p}[M] \subseteq A^{(m)}.
	\end{equation*}
\end{Lemma}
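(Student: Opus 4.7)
The strategy is a sliding-ball construction followed by an \emph{ABP-type} integral estimate on the normal bundle.

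\emph{Sliding balls.} Fix $\rho \in (0,1/2)$ (take $\rho = 3/8$) and set $K = \{y \in a + T : |\T(y-a)| < \rho r\}$. By hypothesis (II), for $t \geq 9r/16$ and $y \in K$ one has $\bm{\delta}_A(y+tv) \geq r/2$: any $x \in A \cap C_{4r}(T,a)$ obeys $v \bullet (x-a) \leq r/16$, while $x \in A \setminus C_{4r}(T,a)$ is separated from $y+tv$ by the cylinder geometry. On the other hand $a \in A$ with $|y-a| < \rho r < r/2$ gives $\bm{\delta}_A(y) < r/2$. Thus $t^{*}(y) := \sup\{t : \bm{\delta}_A(y+tv) < r/2\} \in (0,9r/16]$ with $\bm{\delta}_A(y+t^{*}(y)v) = r/2$ by continuity. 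Choosing $x_y \in A$ with $|x_y - c_y| = r/2$ for $c_y := y + t^{*}(y)v$ and setting $u_y := (c_y - x_y)/(r/2)$ yields $(x_y,u_y) \in N(A)$, $\bm{\delta}_A(x_y + (r/2)u_y) = r/2$, and $x_y \in \Clos C_{(11/8)r}(T,a)$ by the triangle inequality.

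\emph{Definition of $M$.} Let $\tilde M = \{(x_y,u_y) : y \in K\}$ and $M = \tilde M \cap R(N(A)) \cap \mathbf{p}^{-1}(A^{(m)})$. Three of the conclusions are immediate: $\bm{\delta}_A(x+(r/2)u) = r/2$, containment in $N(A)|\Clos C_{(11/8)r}(T,a)$, and $\mathbf{p}[M] \subseteq A^{(m)}$. For the fiber positivity, at a regular $(x,u) \in M$ with $x \in A^{(m)}$ the remark \ref{Lusin condition and dimension of tangent space} gives $\dim T_A(x,u) = m$, so the fiber $N(A,x)$ is $(n-m-1)$-dimensional near $u$; moreover the image $\{c_y : y \in K\}$ is an $(n-1)$-dimensional subset of the level set $\{\bm{\delta}_A = r/2\}$ and coincides with it locally, whence $\{u : (x,u) \in \tilde M\}$ has positive $\Haus{n-m-1}$-measure.

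\emph{Area-formula estimate.} Define $\mu \colon N(A) \to T$ by $\mu(x,u) = \T(x + (r/2)u)$. By construction $\mu(x_y,u_y) = y$, and $\mu|_{\tilde M}$ is injective (from $c = x+(r/2)u$ one recovers $x = \bm{\xi}_A(c)$ since $\bm{\delta}_A(c) = r/2$). The area formula for Lipschitz maps on the $(n-1)$-rectifiable set $\tilde M$ gives
\begin{equation*}
\omega_{n-1}(\rho r)^{n-1} \;\leq\; \int_{\tilde M} |J\mu(x,u)|\, d\Haus{n-1}(x,u).
\end{equation*}
Decomposing $T_{N(A)}(x,u)$ into the $m$-dimensional horizontal tangent (governed by $I + (r/2)Q_A$) and the $(n-m-1)$-dimensional fiber (governed by $(r/2)I$), one computes $|J\mu(x,u)| \leq C_{n,m}(r/2)^{n-m-1}|\det_{T_A}(\T \circ (I + (r/2)Q_A))|$. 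Since $(x,u) \in \tilde M$ admits a touching ball of radius $r/2$, $(2/r)I - Q_A$ is positive semidefinite on $T_A(x,u)$; applying AM-GM to its nonnegative spectrum together with Lemma \ref{Linear algebra trace} (with $b = I + (r/2)Q_A$ and $f = \T|_{T_A}$) yields
\begin{equation*}
|J\mu(x,u)| \;\leq\; C'_{n,m}(r/2)^{n-m-1}\bigl(1 + \tfrac{r}{2m}\trace Q_A(x,u)\bigr)^{m}.
\end{equation*}
Splitting the $\tilde M$-integral by the Coarea formula \ref{area formula for the gauss map} into an $\Haus{m}$-integration over $\mathbf{p}[\tilde M]$ and an $\Haus{n-m-1}$-integration over the fibers, inserting the bound $\trace Q_A \leq f$ from (III), and using hypothesis (III) to absorb $\int\!\int f^{m}$ by $\delta$, one arrives at
\begin{equation*}
\omega_{n-1}(\rho r)^{n-1} \;\leq\; C''_{n,m}\, r^{n-1}\bigl(\Haus{m}(\mathbf{p}[M]) + \delta\bigr).
\end{equation*}
Choosing $\delta$ sufficiently small yields $\Haus{m}(\mathbf{p}[M]) \geq \sigma r^{m}$ for some $\sigma(n,m) > 0$.

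\emph{Main obstacle.} The crux is bounding the Jacobian $|J\mu|$ by the trace of $Q_A$. The touching ball yields only the one-sided bound $\kappa_i \leq 2/r$, and hypothesis (III) is also a one-sided trace bound, so very negative $\kappa_i$ make control of $\det(I + (r/2)Q_A)$ subtle; the positive-semidefiniteness of $(2/r)I - Q_A$, combined with Lemma \ref{Linear algebra trace}, is exactly the device that converts the trace bound into the needed determinant bound. Ancillary technicalities — measurable selection of $y \mapsto (x_y,u_y)$ and Jacobian analysis on the merely $(n-1)$-rectifiable set $N(A)$ — are handled via the approximate-derivative machinery of \cite{2017arXiv170801549S}.
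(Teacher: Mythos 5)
Your approach is genuinely different from the paper's and the overall strategy is a reasonable one, but several of the steps that look routine are in fact where the difficulty of the lemma lives, and as written they do not go through.

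\textbf{What is different.} The paper does not slide balls in the original frame. It first applies the parabolic diffeomorphism $F(x)=x+(r/8)v-(4r)^{-1}|T_{\natural}(x)|^{2}v$, then slides \emph{hyperplanes} against $F[A]$. The touching set $L$ is by definition a closed set of supporting pairs, $Q_{F[A]}\geq 0$ on $L$ automatically, the lower bound $\Haus{n-1}(\mathbf{q}[L])\geq c_{0}$ comes from elementary convex geometry of a cone, and the upper bound comes from the $\mathbf{q}$-Coarea formula \ref{area formula for the gauss map} combined with AM--GM and the transfer formula \ref{second fundamental form and diffeomorphisms}. Your proposal eliminates the diffeomorphism entirely (hence Lemmas \ref{normal bundle and diffeomorphisms}, \ref{Lusin property and diffeomorphisms} and Theorem \ref{second fundamental form and diffeomorphisms} become unnecessary), slides balls of radius $r/2$ directly, and replaces the Gauss-map estimate by an area-formula estimate for the projection $\mu(x,u)=T_{\natural}(x+(r/2)u)$. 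If it worked it would be a simplification; but three of the steps have genuine gaps.

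\textbf{Gap 1: fiber positivity.} You need $\Haus{n-m-1}\{v:(x,v)\in M\}>0$ for $x\in\mathbf{p}[M]$, and your argument for it (``$\{c_{y}:y\in K\}$ coincides locally with the level set $\{\bm{\delta}_{A}=r/2\}$'') is not valid. Your $\tilde M$ is built from a \emph{selection} $y\mapsto(x_{y},u_{y})$, so a priori each $x\in\mathbf{p}[\tilde M]$ carries only those $u$'s that arise from some $y$ with $x_{y}=x$, a set that could well be a single point. Nothing in the construction forces the full $(n-m-1)$-dimensional slab of $\Dis(A,x)\cap\partial\mathbf{B}(0,r/2)$ to be hit. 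The paper sidesteps this: it works with the full supporting set $L$, defines $D=\{z:\Haus{n-m-1}\{\zeta:(z,\zeta)\in L\}>0\}$, sets $M=\nu_{F^{-1}}[L|D]$, and notices that the $\mathbf{q}$-Coarea identity already integrates only over the positive-measure fibers, so the part of $\mathbf{p}[L]$ outside $D$ contributes nothing. To salvage your route you would need to take $\tilde M$ to be the \emph{full} set $N(A)\cap\{(x,u):\bm{\delta}_{A}(x+(r/2)u)=r/2,\;T_{\natural}(x+(r/2)u-a)\in T_{\natural}[K]\}$ (which also fixes the measurability of the selection) and then perform the same restriction to $D$.

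\textbf{Gap 2: the Jacobian bound and Lemma \ref{Linear algebra trace}.} The bound
$|J\mu|\leq C(r/2)^{n-m-1}\bigl(1+\tfrac{r}{2m}\trace Q_{A}\bigr)^{m}$
is asserted, not derived, and the derivation is not routine: the tangent to $N(A)$ is the graph of the ``horizontal'' directions $(\tau_{1},\sigma_{1})$ with $Q_{A}(\tau,\tau_{1})=\tau\bullet\sigma_{1}$ plus the fiber $(0,\sigma)$, and these two subspaces are not orthogonal, so the Jacobian does not simply factor into ``$\det(I+(r/2)Q_{A})$ times $(r/2)^{n-m-1}$''. Moreover, Lemma \ref{Linear algebra trace} with $b=I+(r/2)Q_{A}$ does not apply as stated: the lemma requires an \emph{upper} bound $b(w,w)\leq t|w|^{2}$, i.e.\ an upper bound on $Q_{A}$, whereas the touching ball only gives the lower bound $Q_{A}(\cdot,\cdot)\geq-(2/r)|\cdot|^{2}$ (which is what the paper records, and what you actually use two lines later for AM--GM). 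The paper applies Lemma \ref{Linear algebra trace} with the right object, effectively $b=-Q_{A}$, to turn the ball bound into a trace bound; you would need to do the same. Note also that your displayed ``$(2/r)I-Q_{A}$ is positive semidefinite'' has the wrong sign relative to the paper's convention.

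\textbf{Gap 3: the disintegration step.} You ``split the $\tilde M$-integral by the Coarea formula \ref{area formula for the gauss map}'', but \ref{area formula for the gauss map} is a $\mathbf{q}$-Coarea formula (it counts preimages under $\mathbf{q}$ and produces the Gauss--Kronecker factor $|\discr Q_{A}|$); what you need is the $\mathbf{p}$-disintegration of $\Haus{n-1}\restrict N(A)$ into $\Haus{m}$ over $A^{(m)}$ times $\Haus{n-m-1}$ over the fibers, which is a different statement obtained from \cite[3.2.22]{MR0257325} together with Remark \ref{Lusin condition and dimension of tangent space} and the Lusin~(N) condition. This is exactly the kind of justification that the paper's route avoids by working with $\mathbf{q}[L]$ throughout. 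Until these three points are repaired, the argument does not establish the lower bound $\Haus{m}(\mathbf{p}[M])\geq\sigma r^{m}$, which is the content of the lemma.
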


\begin{proof}
	We assume $ a =0 $ and we let $C_{t}=C_{t}(T,0)$ whenever $ 0 < t < \infty $.
	
	By \ref{localizing the Lusin property} we notice that $ N(\Clos (A \cap C_{4r})) $ satisfies the $ m $ dimensional Lusin (N) condition in $ C_{4r} $ and we replace $ A $ with $ \Clos (A \cap C_{4r}) $. We consider the diffeomorphism $ F : \Real{n} \rightarrow \Real{n} $ given by
	\begin{equation*}
	F(x)= x + (r/8)v -(4r)^{-1}|T_{\natural}(x)|^{2}v \quad \textrm{for $ x \in \Real{n} $}
	\end{equation*}
	and we compute  
	\begin{equation*}
	F^{-1}(x)= x - (r/8)v +(4r)^{-1}|T_{\natural}(x)|^{2}v,
	\end{equation*}
	\begin{equation*}
	\Der F(x)(u) = u - (2r)^{-1}(T_{\natural}(x) \bullet T_{\natural}(u)  )v,
	\end{equation*}
	\begin{equation*}
	\Der F^{-1}(x)(u) = u + (2r)^{-1}(T_{\natural}(x) \bullet T_{\natural}(u)  )v,
	\end{equation*}
	\begin{equation*}
	\Der^{2}F^{-1}(x)(u_{1}, u_{2}) = (2r)^{-1}(T_{\natural}(u_{1}) \bullet T_{\natural}(u_{2}) )v,
	\end{equation*}
	for $ x, u , u_{1}, u_{2} \in \Real{n}$. Moreover we notice
	\begin{equation}\label{Around a point where the set is almost flat in one direction:1}
	F^{-1}[\Clos C_{r}] \subseteq \Clos C_{(11/8)r},
	\end{equation} 
	\begin{equation}\label{Around a point where the set is almost flat in one direction:2}
	\sup_{x\in A}F(x)\bullet v \leq 3r/16, \quad \sup_{x \in A, |T_{\natural}(x)| \geq r} F(x)\bullet v \leq - r/16.
	\end{equation}
	Suppose $ L $ is the set of $(z,\eta) \in (F[A] \cap \Clos C_{r}) \times \mathbf{S}^{n-1} $ such that $(w-z) \bullet \eta \leq 0 $ whenever $ w \in F[A] $. We observe that $ L $ is compact and, noting \eqref{Around a point where the set is almost flat in one direction:1},
	\begin{equation}\label{Around a point where the set is almost flat in one direction:3}
	L \subseteq N(F[A])|F[\Clos C_{(11/8)r}].
	\end{equation}

	We define $ \nu_{F^{-1}} $ as in \ref{normal bundle and diffeomorphisms} and we prove that
	\begin{equation}\label{Around a point where the set is almost flat in one direction:4}
	\bm{\delta}_{A}(F^{-1}(z)+(r/2)(\mathbf{q}\circ \nu_{F^{-1}})(z,\eta)) = r/2 \quad \textrm{for $ (z,\eta) \in L $.}
	\end{equation}	
	In fact, if $ x= F^{-1}(z) $, $ \zeta = (\mathbf{q}\circ \nu_{F^{-1}})(z,\eta) $ and $ y \in A $, we compute
	\begin{flalign*}
	|\Der F(x)^{\ast}(\eta)|^{-1}\eta & =	(\Der F(x)^{\ast})^{-1}(\zeta) = \zeta + (2r)^{-1}(v \bullet \zeta)T_{\natural}(x),\\
	0 & \geq  (DF(x)^{\ast})^{-1}(\zeta) \bullet (F(y)-F(x)) \\
	&  = \zeta \bullet (y-x) + (4r)^{-1}(|T_{\natural}(x)|^{2}-|T_{\natural}(y)|^{2})(v \bullet \zeta)\\
	& \quad + (2r)^{-1}(T_{\natural}(x)\bullet (y-x))(v \bullet \zeta)\\
	& = \zeta \bullet (y-x) - (4r)^{-1}|T_{\natural}(y-x)|^{2}(v \bullet \zeta),\\
	|y-x-(r/2)\zeta|^{2} & = |y-x|^{2} + (r^{2}/4)- r(y-x) \bullet \zeta \\
	&  \geq |y-x|^{2} + (r^{2}/4)-(1/4)|T_{\natural}(y-x)|^{2}(v \bullet \zeta) \geq r^{2}/4.
	\end{flalign*}
	
	Let $ c_{0}= \Leb{n-1}(\mathbf{U}(0,1/(1+32^{2})^{1/2})) $ and we prove that
	\begin{equation}\label{Around a point where the set is almost flat in one direction:5}
	\Haus{n-1}(\mathbf{q}[L]) \geq  c_{0}.
	\end{equation}
	We consider the closed convex cone
	\begin{equation*}
	C = \{ (1-t) F(0)+ tx: x \in T \cap \mathbf{B}(0,4r), \; 0 \leq t < \infty  \}
	\end{equation*}
	and we notice that
	\begin{equation}\label{Around a point where the set is almost flat in one direction:6}
	\Dual \Nor(C, F(0)) = \Tan (C,F(0)) = \{ z - F(0) : z \in C  \},
	\end{equation}
	\begin{equation}\label{Around a point where the set is almost flat in one direction:7}
	\{ z : z \bullet v \leq 0 , \; |T_{\natural}(z)| \leq 4r \} \subseteq C.
	\end{equation}
	A direct computation shows that
	\begin{equation*}
	\mathbf{S}^{n-1} \cap	\Nor(C, F(0)) = \mathbf{S}^{n-1} \cap \{ \eta: 32/(1+32^{2})^{1/2} \leq \eta \bullet v \leq 1  \},
	\end{equation*}
	whence we readily infer that
	\begin{equation*}
	\Haus{n-1}(\Nor(C, F(0)) \cap \mathbf{S}^{n-1}) \geq c_{0}.
	\end{equation*}
	Therefore in order to prove \eqref{Around a point where the set is almost flat in one direction:5} it remains to check that
	\begin{equation*}
	\Nor(C, F(0)) \cap \mathbf{S}^{n-1} \subseteq \mathbf{q}[L].
	\end{equation*}
	Let $ \eta \in 	\Nor(C, F(0)) \cap \mathbf{S}^{n-1} $. In the case that $(z-F(0)) \bullet \eta \leq 0 $ for every $ z \in F(A) $, then it is obvious that $(F(0), \eta) \in L $ (notice that $ F(0) = (r/8)v \in C_{r} $). Therefore we assume that $ s = \sup \{ (z-F(0)) \bullet \eta : z \in F[A]  \} > 0 $. If $ z \in F[A] $ and $ (z-F(0)) \bullet \eta > 0 $ then we notice that
	\begin{equation*}
	z \notin C \;\; \textrm{by \eqref{Around a point where the set is almost flat in one direction:6}}, \qquad |T_{\natural}(z)| \leq 4r \;\; \textrm{by definition of $ F $,}
	\end{equation*}
	\begin{equation*}
	\textrm{$ z \bullet v > 0 $ by \eqref{Around a point where the set is almost flat in one direction:7}}, \qquad \textrm{$ |T_{\natural}(z)| < r $ and $|T^{\perp}_{\natural}(z)| < 3r/16 $ by \eqref{Around a point where the set is almost flat in one direction:2}},  
	\end{equation*}
	that means $ z \in C_{r} $. Therefore we select $ z_{0} \in F[A] $ such that $ (z_{0} - F(0)) \bullet \eta = s $ and, noting that the maximality of $ z_{0} $ implies that
	\begin{equation*}
	( w-z_{0}) \bullet \eta \leq 0 \quad \textrm{for every $ w \in F[A] $,}
	\end{equation*}
	we conclude that $ (z_{0},\eta) \in L $.
	
	We notice that $N(F[A])$ satisfies the $m$ dimensional Lusin (N) condition in $F[C_{4r}]$ by \ref{Lusin property and diffeomorphisms}. Therefore, employing \ref{Lusin condition and dimension of tangent space}, \ref{second fundamental form and diffeomorphisms}, \cite[4.8]{2017arXiv170801549S} and \eqref{Around a point where the set is almost flat in one direction:4} and noting that $ \Der F(F^{-1}(z))= \Der F^{-1}(z)^{-1} $ for $ z \in \Real{n} $, we infer at $\Haus{n-1}$ a.e.\ $ (z,\eta)\in L $ that 
	\begin{equation}\label{Around a point where the set is almost flat in one direction:8}
	\dim T_{F[A]}(z,\eta) = m,  \quad \Der F^{-1}(z)[T_{F[A]}(z,\eta)] = T_{A}(\nu_{F^{-1}}(z,\eta)),  
	\end{equation}
	\begin{equation}\label{Around a point where the set is almost flat in one direction:9}
	Q_{F[A]}(z,\eta)\geq 0, 
	\end{equation}
	\begin{equation}
	 Q_{A}(\nu_{F^{-1}}(z,\eta))(\tau, \tau) \geq - (r/2)^{-1}|\tau|^{2} \quad \textrm{for $ \tau \in T_{A}(\nu_{F^{-1}}(z,\eta)) $,}
	\end{equation}
	\begin{flalign}\label{Around a point where the set is almost flat in one direction:10}
	Q_{F[A]}(z,\eta) & = |(\Der F(F^{-1}(z)))^{\ast}(\eta)| Q_{A}(\nu_{F^{-1}}(z,\eta))\circ \textstyle \bigodot_{2}\big(\Der F^{-1}(z)| T_{F[A]}(z,\eta)\big) \nonumber\\
	&\quad  - (\Der^{2}F^{-1}(z)| \textstyle \bigodot_{2}T_{F[A]}(z,\eta)) \bullet \Der F(F^{-1}(z))^{\ast}(\eta).
	\end{flalign}
	In particular, by \cite[2.10.25]{MR0257325}, the same conclusion holds for $\Haus{m}$ a.e.\ $ z \in \mathbf{p}[L] $ and for $\Haus{n-m-1}$ a.e.\ $ \eta \in \{\zeta : (z,\zeta) \in L  \}$.
	We combine \ref{area formula for the gauss map} and the classical inequality relating the arithmetic and the geometric means of a family of non negative numbers (see \cite[pp.\ 29]{MR0274683}) to estimate
	\begin{flalign*}
	\Haus{n-1}(\mathbf{q}[L]) & \leq \int_{\mathbf{S}^{n-1}} \Haus{0}\{z : (z,\eta) \in L\}d\Haus{n-1}\eta  \\
	& = \int_{F[A]} \int_{\{z\} \times \{\eta : (z,\eta)\in L\} } \discr Q_{F[A]} d\Haus{n-m-1}d\Haus{m}z \\
	& \leq m^{-m} \int_{F[A]} \int_{\{z\} \times \{\eta : (z,\eta)\in L\} } (\trace Q_{F[A]})^{m}d \Haus{n-m-1}d\Haus{m}z. 
	\end{flalign*}
	We observe that if $ z \in \Clos C_{r} $, $ \eta \in \mathbf{S}^{n-1} $ and $ S \in \mathbf{G}(n,m) $, then
	\begin{equation*}
	\| \Der F(F^{-1}(z)) \| \leq 3/2, \quad  \| \Der F^{-1}(z) \| \leq 3/2,
	\end{equation*}
	\begin{equation}\label{Around a point where the set is almost flat in one direction:12}
	| \trace [(\Der^{2}F^{-1}(z)|\textstyle \bigodot_{2}S) \bullet \Der F(F^{-1}(z))^{\ast}(\eta)] | \leq (3/4)m r^{-1}.
	\end{equation}
	Therefore, noting \eqref{Around a point where the set is almost flat in one direction:8} and \eqref{Around a point where the set is almost flat in one direction:9}, we use \ref{Linear algebra trace} to infer that
	\begin{flalign*}
	&\trace\big[	Q_{A}(\nu_{F^{-1}}(z,\eta))\circ \textstyle \bigodot_{2}\big(\Der F^{-1}(z)| T_{F[A]}(z,\eta)\big)\big] \\
	&\quad \quad \leq c_{1}\big[ \trace\big( Q_{A}(\nu_{F^{-1}}(z,\eta))\big) + r^{-1}\big]
	\end{flalign*}
	whence we readily deduce using \eqref{Around a point where the set is almost flat in one direction:10} and \eqref{Around a point where the set is almost flat in one direction:12} that
	\begin{equation*}
	(\trace Q_{F[A]}(z,\eta))^{m} \leq c_{1}(f(\nu_{F^{-1}}(z,\eta))^{m} + r^{-m} )
	\end{equation*}
	for $ \Haus{n-1} $ a.e.\ $ (z,\eta) \in L $, where $ c_{1} $ is a constant depending only on $ m $.
	
	Since $ N(F(A))$ is countably $n-1$ rectifiable and $ L \subseteq N(F(A)) $ one may argue as in \cite[2.10.26]{MR0257325} to prove that $D =\{ z : \Haus{n-m-1}\{\zeta : (z,\zeta)\in L \} > 0 \}$ is a Borel subset of $\Real{n}$. Define
	\begin{equation*}
	M = \nu_{F^{-1}}[L|D],
	\end{equation*}
	and notice that it follows from \ref{normal bundle and diffeomorphisms} that $M$ is a Borel subset of $N(A)$ with
	\begin{equation*}
	\Haus{n-m-1}\{v : (x,v) \in M\}    > 0 \quad \textrm{for $ x \in \mathbf{p}[M] $.}
	\end{equation*}
	Moreover $\mathbf{p}[M]\subseteq \Clos C_{(11/8)r}$ by \eqref{Around a point where the set is almost flat in one direction:3} and $\bm{\delta}_{A}(x + (r/2)u) = r/2$ whenever $ (x,u) \in M $ by \eqref{Around a point where the set is almost flat in one direction:4}. Noting that $N(A,z)$ is contained in an $n-m$ dimensional plane whenever $ z \in A^{(m)} $ (see section \ref{section stratification}) and that
	\begin{equation*}
	\Lip(F|\Clos C_{(11/8)r}) \leq \sup_{z \in \Clos C_{(11/8)r}} \| \Der F(z) \| \leq 27/16,
	\end{equation*}
	we use \ref{normal bundle and diffeomorphisms} to estimate
\begin{flalign*}
&	\int \Haus{n-m-1} \{\zeta : (z,\zeta)\in L \} > 0 \}\, d\Haus{m}z \\
& \qquad \leq \int_{D \cap F(A)^{(m)}}\Haus{n-m-1}(N(A,z)) d\Haus{m}z \\
& \qquad \leq \Haus{n-m-1}(\mathbf{S}^{n-m-1})\Haus{m}[F(A^{(m)} \cap \mathbf{p}(M))] \\
& \qquad  \leq \Haus{n-m-1}(\mathbf{S}^{n-m-1})(27/16)^{m}\Haus{m}(\mathbf{p}(M)),
	\end{flalign*}
	\begin{flalign*}
&	\int_{F[A]}\int_{\{\zeta : (z,\zeta)\in L \}}  f(\nu_{F^{-1}}(z,\eta))^{m}d\Haus{n-m-1}\eta \, d\Haus{m}z \\
&	\qquad \leq 	c_{2}\int_{A}\int_{\{\zeta : (z,\zeta)\in M \}}f(z,\eta)^{m} d\Haus{n-m-1}\eta d\Haus{m}z,
	\end{flalign*}
	where $c_{2}$ is a constant depending on $m$ and $n$. Therefore,
	\begin{flalign}\label{Around a point where the set is almost flat in one direction:11}
	\Haus{n-1}(\mathbf{q}[L]) & \leq m^{-m} c_{1} c_{2}\int_{A}\int_{\{\zeta : (z,\zeta)\in M \}} f(z,\eta)^{m}d\Haus{n-m-1}\eta d\Haus{m}z \nonumber \\
	& \quad + m^{-m}c_{1} (27/16)^{m}\Haus{n-m-1}(\mathbf{S}^{n-m-1})\Haus{m}(\mathbf{p}[M])r^{-m}.
	\end{flalign}
	Noting \eqref{Around a point where the set is almost flat in one direction:5}, we choose $ \delta> 0 $ in \ref{Around a point where the set is almost flat in one direction:hp3} so that
	\begin{equation*}
	c_{0}-m^{-m}c_{1}c_{2}\delta \geq c_{0}/2
	\end{equation*}
	and we conclude from \eqref{Around a point where the set is almost flat in one direction:11} that
	\begin{equation*}
	\Haus{m}(\mathbf{p}[M]) \geq \sigma r^{m}
	\end{equation*}
	with $ \sigma = m^{m}c_{1}^{-1} (27/16)^{-m}\Haus{n-m-1}(\mathbf{S}^{n-m-1})^{-1}(c_{0}/2) $. Being $ \Haus{m}(\mathbf{p}[M]) > 0 $ and $ \Haus{n-m-1}(N(A,a)) > 0 $ for all $ a \in \mathbf{p}[M] $ it follows that $ \mathbf{p}[M] \subseteq A^{(m)} $.
\end{proof}

\begin{Theorem} \label{final result about second order rectifiability}
	Suppose $ 1 \leq m < n $ are integers, $ A \subseteq \Real{n} $ is a closed set, $\Omega \subseteq \Real{n} $ is an open set, $N(A)$ satisfies the $ m $ dimensional Lusin (N) condition in $ \Omega $, $ \Haus{m}(A \cap K)< \infty $ whenever $ K \subseteq \Omega $ is compact, for $ \Haus{m} $ a.e.\ $ a \in A \cap \Omega $ there exists $ v \in \mathbf{S}^{n-1} $ such that
	\begin{equation*}
	\lim_{r \to 0} r^{-1}\sup \{ v \bullet (x-a): x \in \mathbf{B}(a,r) \cap A \} =0, 
	\end{equation*}
	and there exists a non negative $\Haus{n-1}$ measurable function $f$ on $N(A)|\Omega$ such that
	\begin{equation*}
	\trace Q_{A}(a,u) \leq f(a,u) \quad \textrm{for $\Haus{n-1}$ a.e.\ $(a,u) \in N(A)|\Omega$,}
	\end{equation*}
	\begin{equation*}
	\int_{K \cap A}   \int_{\{z\} \times N(A,z)} f^{m} d\Haus{n-m-1} d\Haus{m}z < \infty,
	\end{equation*}
	whenever $ K \subseteq \Omega$ is compact.
	
	Then $ \Haus{m}(A \cap \Omega \sim A^{(m)})=0 $. In particular, $A \cap \Omega$ is countably $\rect{m}$ rectifiable of class $2$.
\end{Theorem}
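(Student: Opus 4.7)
The plan is to argue by contradiction on the ``bad'' set $B := (A \cap \Omega) \setminus A^{(m)}$, using Lemma \ref{Around a point where the set is almost flat in one direction} at small scales around $\Haus{m}$-typical points of $B$, and then invoking a density argument to reach absurdity. Suppose $\Haus{m}(B) > 0$. Because $\Haus{m}(A \cap K) < \infty$ for every compact $K \subseteq \Omega$, both $\Haus{m} \restrict A$ and $\Haus{m} \restrict A^{(m)}$ are Radon measures on $\Omega$. By the density theorem for Radon measures (e.g.\ \cite[2.10.19(3)]{MR0257325}), since $A^{(m)} \cap B = \varnothing$, one has $\bm{\Theta}^{\ast m}(\Haus{m} \restrict A^{(m)}, a) = 0$ for $\Haus{m}$-a.e.\ $a \in B$. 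My task reduces to contradicting this conclusion.

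Set $g(z) := \int_{\{z\} \times N(A,z)} f^{m}\,d\Haus{n-m-1}$, which is $\Haus{m} \restrict (A \cap \Omega)$-locally integrable by hypothesis. At $\Haus{m}$-a.e.\ $a \in B$ I may simultaneously arrange: (i) the flatness direction $v = v(a) \in \mathbf{S}^{n-1}$ exists, (ii) $\bm{\Theta}^{\ast m}(\Haus{m} \restrict A^{(m)}, a) = 0$, and (iii) $\int_{\mathbf{B}(a,\rho) \cap A} g\,d\Haus{m} \to 0$ as $\rho \to 0$ (continuity of the integral of $g$ at the non-atomic point $a$). Fix such $a$, set $T := v^{\perp} \in \mathbf{G}(n,n-1)$, so that $T_{\natural}(v) = 0$. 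Choose $r > 0$ small enough that $C_{4r}(T,a) \subseteq \Omega$ (so hypothesis (I) of Lemma \ref{Around a point where the set is almost flat in one direction} is inherited from $\Omega$), small enough that $\sup\{v \bullet (x-a) : x \in \mathbf{B}(a, 4\sqrt{2}\,r) \cap A\} \leq r/16$ by (i) (giving hypothesis (II), since $C_{4r}(T,a) \subseteq \mathbf{B}(a,4\sqrt{2}\,r)$), and small enough that $\int_{C_{2r}(T,a) \cap A} g\,d\Haus{m} \leq \delta$ by (iii) (giving hypothesis (III) with the universal $\delta$ from the Main Lemma).

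The Main Lemma then produces a Borel set $M \subseteq N(A) | \Clos C_{(11/8)r}(T,a)$ with $\mathbf{p}[M] \subseteq A^{(m)}$ and $\Haus{m}(\mathbf{p}[M]) \geq \sigma r^{m}$. Since $\Clos C_{(11/8)r}(T,a) \subseteq \mathbf{B}(a, (11/8)\sqrt{2}\,r)$, this yields
\begin{equation*}
\Haus{m}\bigl(A^{(m)} \cap \mathbf{B}(a, (11/8)\sqrt{2}\,r)\bigr) \geq \sigma r^{m}
\end{equation*}
for all sufficiently small $r > 0$, which forces $\bm{\Theta}^{\ast m}(\Haus{m} \restrict A^{(m)}, a) \geq \sigma \bigl((11/8)\sqrt{2}\bigr)^{-m} / \bm{\alpha}(m) > 0$, contradicting (ii). Hence $\Haus{m}(B) = 0$, so $A \cap \Omega$ differs from $A^{(m)} \cap \Omega$ by an $\Haus{m}$-null set, and the $\rect{m}$-rectifiability of class $2$ follows from \cite[4.12]{2017arXiv170309561M}.

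The main obstacle I anticipate is purely in packaging: everything hinges on verifying that conditions (I)--(III) of the Main Lemma can be guaranteed simultaneously for small $r$ at $\Haus{m}$-a.e.\ point of $B$, together with the clean extraction of the density-zero property at bad points from Federer's differentiation theorem. No further blow-up or geometric input is needed beyond Lemma \ref{Around a point where the set is almost flat in one direction} itself; the Vitali/density machinery is doing the rest of the work.
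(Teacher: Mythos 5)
Your argument is correct and follows essentially the same path as the paper's proof: verify the hypotheses (I)--(III) of Lemma \ref{Around a point where the set is almost flat in one direction} at $\Haus{m}$-a.e.\ point for all small scales (using the flatness hypothesis, the inheritance of the Lusin (N) condition on subdomains, and absolute continuity of the indefinite integral --- Federer's 2.4.11 --- to get the $\delta$-bound on the mean-curvature integral), then convert the resulting lower bound $\Haus{m}(A^{(m)}\cap\mathbf{B}(a,Cr))\geq\sigma r^{m}$ into a positive density of $A^{(m)}$ at $a$, and finally compare against the Federer density theorem 2.10.19 to conclude $\Haus{m}(A\cap\Omega\sim A^{(m)})=0$. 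The only cosmetic difference is that you phrase the density step as a contradiction on a hypothetical positive-measure bad set $B$, whereas the paper states directly that $\bm{\Theta}_{*}^{m}(\Haus{m}\restrict A^{(m)},a)>0$ for a.e.\ $a\in A\cap\Omega$ and then cites 2.10.19(4); these are equivalent formulations of the same step.
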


\begin{proof}
	Firstly we notice that $C_{4r}(T,a) \subseteq \mathbf{B}(a, 4\sqrt{2}r) $ for every $ a \in \Real{n} $ and $ T \in \mathbf{G}(n, n-1) $. If $ \delta $ is given as in \ref{Around a point where the set is almost flat in one direction}, with the help of \cite[2.4.11]{MR0257325}, for $ \Haus{m} $ a.e.\ $ a \in A \cap \Omega $ we can select $ s > 0 $ and $ v \in \mathbf{S}^{n-1} $ such that $ \mathbf{B}(a, 4\sqrt{2}s) \subseteq \Omega $, 
	\begin{equation*}
	r^{-1}\sup \{ v \bullet (x-a): x \in \mathbf{B}(a,4\sqrt{2}r) \cap A \} \leq 1/16,
	\end{equation*}
	and
	\begin{equation*}
	\int_{C_{2r}(T,a) \cap A}   \int_{\{z\} \times N(A,z)} f^{m} d\Haus{n-m-1} d\Haus{m}z \leq \delta
	\end{equation*}
	for every $ 0 < r  < s $, where $ T \in \mathbf{G}(n, n-1) $ such that $T_{\natural}(v) =0 $. It follows from \ref{Around a point where the set is almost flat in one direction} that 
	\begin{equation*}
	\infHdensity{m}{A^{(m)}}{a} >0 \quad \textrm{for $ \Haus{m} $ a.e.\ $ a \in A \cap \Omega $.}
	\end{equation*}
	Since $ \Hdensity{m}{A^{(m)}}{a}=0 $ for $ \Haus{m} $ a.e.\ $ a \in  \Omega \sim A^{(m)} $ by \cite[2.10.19(4)]{MR0257325}, we infer that
	\begin{equation*}
	\Haus{m}(A \cap \Omega \sim A^{(m)}) =0.
	\end{equation*}
	The postscript follows from \cite[4.12]{2017arXiv170309561M}.
\end{proof}

\section{Proof of theorem \ref{second order rect for varifolds}}\label{section 4}

Here we prove Theorem \ref{second order rect for varifolds}. The main point will be to check that the closure\footnote{We take the closure in $ \Real{n} $ because Theorem \ref{final result about second order rectifiability} has been formulated for closed subsets in $ \Real{n} $.} in $ \Real{n} $ of the support $ S $ of $ V $ satisfies the hypothesis of the general criterion for $ \mathcal{C}^{2} $ rectifiability in \ref{final result about second order rectifiability}. These hypothesis have been already checked for $ V $ in several different papers, so we just need to collect them here.
\begin{enumerate}
	\item \emph{$ \Haus{m}(S \cap K) < \infty $ for every compact set $ K \subseteq \Omega$.} This follows combining the upper-semicontinuity of the density function $\bm{\Theta}^{m}(\|V\|, \cdot)$, see \cite[8.6]{MR0307015}, with the fact that $ \|V\| = \Haus{m}\restrict \bm{\Theta}^{m}(\|V\|, \cdot) $. In fact, we obtain the stronger conclusion $ \Haus{m} \restrict S \leq \theta^{-1} \|V\| $.
	\item \emph{$N(\Clos S)$ satisfies the $m$ dimensional Lusin (N) condition and
		\begin{equation*}
		\trace Q_{\Clos S}(a,u) \leq h \quad \textrm{for $ \Haus{n-1} $ a.e.\ $(a,u) \in N(\Clos S)|\Omega $.}
		\end{equation*}}
Noting \cite[2.8]{MR3466806}, this is a special case of \cite[3.7]{2019arXiv190310379S}.
\item \emph{For $ \|V\| $ a.e.\ $ a \in \Omega $ there exists an $ m $ dimensional plane $ T $ such that
\begin{equation*}
\lim_{r \to 0}r^{-1}\sup\{ \mathbf{\delta}_{T}(x-a) : x \in \mathbf{B}(a,r) \cap S \} =0.
\end{equation*}
} This follows from \cite[17.11]{MR756417}.
\end{enumerate}


\medskip 

\noindent Institut f\"ur Mathematik, Universit\"at Augsburg, \newline Universit\"atsstr.\ 14, 86159, Augsburg, Germany,
\newline mario.santilli@math.uni-augsburg.de

\end{document}